\crefname{equation}{}{}
\newtheorem{theorem}{Theorem}[section]
\newtheorem{lemma}[theorem]{Lemma}
\newtheorem{corollary}[theorem]{Corollary}
\newtheorem*{question*}{Question} \Crefname{question}{Question}{Questions}
\theoremstyle{definition}
\newtheorem{definition}[theorem]{Definition}
\theoremstyle{remark}
\newtheorem*{remark}{Remark}
\newcommand{\abs}[1]{\left\lvert#1\right\rvert}
\newcommand{\norm}[1]{\left\lVert#1\right\rVert}
\newcommand{\floor}[1]{\left\lfloor #1 \right\rfloor}
\newcommand{\ceil}[1]{\left\lceil #1 \right\rceil}
\newcommand{\FF}{\mathbb{F}}
\newcommand{\RR}{\mathbb{R}}
\newcommand{\TT}{\mathbb{T}}
\newcommand{\NN}{\mathbb{N}}
\newcommand{\ZZ}{\mathbb{Z}}
\newcommand{\triforceyellow}{
\begin{tikzpicture}[baseline,scale=.5,yshift=-6pt]
\draw[color=YellowOrange, fill=yellow]  (0,0)  -- +(1,0) -- +(60:1) -- cycle;
\draw[color=YellowOrange, fill=yellow]  (1,0)  -- +(1,0) -- +(60:1) -- cycle;
\draw[color=YellowOrange, fill=yellow]  (60:1) -- +(1,0) -- +(60:1) -- cycle;
\end{tikzpicture}
}
\newcommand{\triforce}{
\begin{tikzpicture}[baseline,scale=.2,yshift=-6pt]
\fill[black]
  (0,0)  -- +(1,0) -- +(60:1) -- cycle
  (1,0)  -- +(1,0) -- +(60:1) -- cycle
  (60:1) -- +(1,0) -- +(60:1) -- cycle
;
\end{tikzpicture}
}
\title{Triforce and corners}
\author[Fox]{Jacob Fox}
\address{Department of Mathematics, Stanford University, Stanford, CA 94305, USA}
\email{jacobfox@stanford.edu}
\thanks{JF was
supported by a Packard Fellowship and NSF Career Award DMS-1352121.}
\author[Sah]{Ashwin Sah}
\address{Massachusetts Institute of Technology, Cambridge, MA 02139, USA}
\email{asah@mit.edu}
\author[Sawhney]{Mehtaab Sawhney}
\address{Massachusetts Institute of Technology, Cambridge, MA 02139, USA}
\email{msawhney@mit.edu}
\author[Stoner]{David Stoner}
\address{Harvard University, Cambridge, MA 02138, USA}
\email{dstoner@college.harvard.edu}
\author[Zhao]{Yufei Zhao}
\address{Department of Mathematics, Massachusetts Institute of Technology, Cambridge, MA 02139, USA}
\email{yufeiz@mit.edu}
\thanks{YZ was supported by NSF Award DMS-1764176, and the MIT Solomon Buchsbaum Fund.}
\date{March 2019}
\begin{document}

\begin{abstract}
May the \emph{triforce} be the 3-uniform hypergraph on six vertices with edges $\{123',12'3,1'23\}$. We show that the minimum triforce density in a 3-uniform hypergraph of edge density $\delta$ is $\delta^{4-o(1)}$ but not $O(\delta^4)$.

Let $M(\delta)$ be the maximum number such that the following holds: for every $\epsilon > 0$ and $G = \mathbb{F}_2^n$ with $n$ sufficiently large, if $A \subseteq G \times G$ with $A \ge \delta|G|^2$, then there exists a nonzero ``popular difference'' $d \in G$ such that the number of ``corners'' $(x,y), (x+d,y), (x,y+d) \in A$ is at least $(M(\delta) - \epsilon)|G|^2$. As a corollary via a recent result of Mandache, we conclude that $M(\delta) = \delta^{4-o(1)}$ and $M(\delta) = \omega(\delta^4)$.

On the other hand, for $0 < \delta < 1/2$ and sufficiently large $N$, there exists $A \subseteq [N]^3$ with $|A|\ge\delta N^3$ such that for every $d \ne 0$, the number of corners $(x,y,z), (x+d,y,z),(x,y+d,z),(x,y,z+d) \in A$ is at most $\delta^{c \log (1/\delta)} N^3$. A similar bound holds in higher dimensions, or for any configuration with at least 5 points or affine dimension at least 3. 
\end{abstract}

\maketitle

\begin{quote}
    \begin{center} \triforceyellow \end{center}

    \noindent \textit{The Triforce \dots the sacred triangle \dots it is a balance that weighs the three forces: Power, Wisdom and Courage. If the heart of the one who holds the sacred triangle has all three forces in balance, that one will gain the True Force to govern all.}
    
    \hfill --- The Legend of Zelda: Ocarina of Time 
\end{quote}

\section{Introduction}

Green \cite{Green05} ($k=3$) and Green--Tao \cite{GrTa10} ($k=4$) proved the following strengthening of Szemer\'edi's theorem, confirming  a conjecture of Bergelson, Host, and Kra \cite{BHKR}: for every $k \in \{3,4\}$ and $\epsilon >0$ there exists $N_0$ such that if $N \ge N_0$ and $A \subseteq [N]:=\{1,2,\dots,N\}$ has $|A| = \delta N$, then there exists some $d > 0$ such there are at least $(\delta^k - \epsilon)N$ different $k$-term arithmetic progressions with common difference $d$ in $A$, i.e.,
\[
|\{x  : x, x+d, \dots, x+(k-1)d \in A\}| \ge (\delta^k - \epsilon) N.
\]
We abbreviate ``$k$-term arithmetic progression'' as \emph{$k$-AP} from now on. Note that the above estimate is tight for a random subset of $[N]$ of density $\delta$. Even though there exist sets $A$ where the total number of $k$-APs is much less than the random example, the above results say that, for $k \in \{3,4\}$, there always exists some ``popular difference'' $d$ shared by lots of $k$-APs in $A$.

Curiously, the above $k$-AP popular difference result is specific to $k \in \{3,4\}$, as it was shown \cite{BHKR} (with appendix by Ruzsa) that the statement is false for every $k \ge 5$. In particular, it was shown that for each fixed $k\ge 5$, there exists a constant $c_k > 0$ such that for every $0 < \delta < 1/2$, there exists $A \subseteq [N]$ with $|A| \ge \delta N$ such that for every $d > 0$,
\[
|\{x  : x, x+d, \dots, x+(k-1)d \in A\}| \le \delta^{c_k \log(1/\delta)} N.
\]

We are interested in a multidimensional variant of the above result. A \emph{corner} is a pattern of the form $(x,y), (x+d,y), (x,y+d) \in G \times G$ where $G$ is an abelian group and $d \ne 0$ (we can also consider an interval $[N]$ instead of $G$). It is a classic result of Ajtai and Szemer\'edi \cite{AjSz}, known as the \emph{corners theorem}, that every corner-free subset of $G \times G$ has density $o(1)$. Solymosi \cite{Sol} gave a simpler proof of the corners theorem, showing that it follows from the diamond-free lemma.\footnote{The corner theorem was originally stated within the integer grid $[n]^2$, but the graph-theoretic proof extends to $G \times G$ where $G$ is a finite group as observed by Solymosi \cite{Sol13}. The diamond-free lemma is also often stated in equivalent versions known as the induced matching theorem or the $(6,3)$-theorem.}  The diamond-free lemma states that any graph on $n$ vertices in which each edge is in exactly one triangle has $o(n^2)$ edges. The diamond-free lemma in turn follows easily from the triangle removal lemma, that any graph on $n$ vertices with $o(n^3)$ triangles can be made triangle-free by removing $o(n^2)$ edges. 

Given $A \subseteq G \times G$ and $d \in G$, let
\[
S_d(A) = \{(x,y) \in G\times G : (x,y), (x+d,y), (x,y+d) \in A\}.
\]
Then $|S_d(A)|$ is the number of corners in $A$ with common difference $d$.
The naive extension of the popular difference statement is false for corners, as Mandache \cite{Man} showed, improving an earlier construction by Chu \cite{Chu}, that there exists $A \subseteq G \times G$ with $|A| \ge \delta |G|^2$ and $|S_d(A)| = O(\delta^{3.13} |G|^2)$. That is, we cannot always expect to find popular differences shared by as many corners as the random case. Though, perhaps we can hope for a bit less and look for popular differences that are shared by at least $\delta^C |G|^2$ corners---less than random, but still plenty. Note that here the exponent $C$ here should depend only on the pattern and not on the groups. 

Fixing a sequence of abelian groups $G = G_n$ with $|G| \to \infty$, let $M(\delta)$ denote the maximum number such that for every $\epsilon > 0$, if $n$ is sufficiently large and $A \subseteq G \times G$, then there exists $d \ne 0$ such that $|S_d(A)| \ge (M(\delta) - \epsilon)|G|^2$.

Mandache related $M(\delta)$ to the following extremal problem. Let a \emph{triforce}, denoted $\triforce$, be a 3-uniform hypergraphs on 6 vertex with edges $\{123',12'3,1'23\}$. Given a measurable function\footnote{In \cite{Man}, the function $W$ was restricted to piecewise constant functions, but these two formulations are equivalent by a standard approximation argument.} $W \colon [0,1]^3 \to [0,1]$, write
\[
\triforce(W) := \int_{[0,1]^6} W(x',y,z)W(x,y',z)W(x,y,z') \,dxdydzdx'dy'dz'
\]
for the triforce density of $W$.
Let $m(\delta)$ denote the infimum (actually a minimum due to compactness\footnote{The compactness in this case essentially follows from Lov\'asz--Szegedy \cite{LS07} as the triforce is a linear hypergraph. See \cite{ES12,Z15} for general compactness results about hypergraph limits.}) of $\triforce(W)$ among all $W$ with $\int W = \delta$.

\begin{theorem}[Mandache] \label{thm:man}
	For all $G_n$, one has $M(\delta) \le m(\delta)$. Furthermore, if $m_\text{convex} \colon [0,1] \to [0,1]$ is any convex function with $m_{\textrm{convex}} \le m$ pointwise, then for $G_n = \FF_p^n$ with fixed prime $p$, one has $M(\delta) \ge m_{\textrm{convex}}(\delta)$.
\end{theorem}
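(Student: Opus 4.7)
For the upper bound $M(\delta) \le m(\delta)$, valid for any abelian group $G$, the plan is a randomized construction. Given $W \colon [0,1]^3 \to [0,1]$ with $\int W = \delta$ realizing $\triforce(W) = m(\delta)$, draw independent uniform random maps $\phi, \psi, \chi \colon G \to [0,1]$ and, independently for each $(x,y) \in G \times G$, include $(x, y)$ in $A$ with probability $W(\phi(x), \psi(y), \chi(x+y))$. Then $\EE|A| = \delta|G|^2$. For any $d \ne 0$ the six values $\phi(x), \phi(x+d), \psi(y), \psi(y+d), \chi(x+y), \chi(x+y+d)$ are i.i.d.\ uniform on $[0,1]$, and the probability that $(x,y), (x+d,y), (x,y+d)$ all lie in $A$ is the product of the three corresponding $W$-evaluations, which integrates (by the triforce-hypergraph symmetry of the three-factor product) to $\triforce(W)$. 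Hence $\EE|S_d(A)| = \triforce(W)|G|^2$. Concentration via McDiarmid's bounded differences inequality (each $\phi(g), \psi(g), \chi(g)$ affects $O(|G|)$ corner-count terms, each inclusion coin affects boundedly many) together with a union bound over $d \in G$ yields a deterministic $A$ with $|A| \ge (\delta-\epsilon)|G|^2$ and $\max_{d \ne 0} |S_d(A)| \le (m(\delta)+\epsilon)|G|^2$, giving the bound.

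For the lower bound $M(\delta) \ge m_{\text{convex}}(\delta)$ in $G = \FF_p^n$, the plan is arithmetic regularity combined with Jensen's inequality. Apply an arithmetic regularity lemma to $1_A$ on $\FF_p^n \times \FF_p^n$ to produce a subspace $H \le \FF_p^n$ of codimension bounded in $\epsilon$ together with, on each coset $B$ of $H \times H$, a local function $W_B \colon [0,1]^3 \to [0,1]$ encoding the tripartite XY/YZ/XZ density profile of $A$ on $B$, with $\int W_B = \delta_B := |A \cap B|/|B|$. A counting lemma then yields, for some popular $d \in H \setminus \{0\}$,
\[
|S_d(A)|/|G|^2 \ge \EE_B[\triforce(W_B)] - \epsilon.
\]
Since $\triforce(W_B) \ge m(\delta_B) \ge m_{\text{convex}}(\delta_B)$ by definition of $m$ and hypothesis, and $\EE_B[\delta_B] = \delta$, convexity of $m_{\text{convex}}$ with Jensen's inequality gives $\EE_B[m_{\text{convex}}(\delta_B)] \ge m_{\text{convex}}(\delta)$, so $|S_d(A)|/|G|^2 \ge m_{\text{convex}}(\delta) - \epsilon$.

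The main obstacle is in part (b). A naive coset-average decomposition of $1_A$ has $L^2$-error of order $\sqrt{\delta}$, which for small $\delta$ swamps the target $m_{\text{convex}}(\delta) = O(\delta^3)$. Instead one must use a regularity decomposition controlled in a Gowers box-type norm matching the tripartite corner structure (rather than a plain block-constant approximation), and extract from it a genuine local function $W_B$ on $[0,1]^3$ with the correct integral and whose triforce density lower-bounds the corner count up to error negligible compared to $m_{\text{convex}}(\delta)$.
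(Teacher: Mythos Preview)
Your proposal matches the paper's treatment exactly: the paper does not prove this theorem in detail (it is Mandache's result, cited from \cite{Man}) but sketches precisely your randomized construction for the upper bound (i.i.d.\ uniform labels $X_a,Y_b,Z_{-a-b}$, inclusion with probability $W$, expectation $\triforce(W)$, Azuma--Hoeffding/McDiarmid concentration) and describes the lower bound only as ``an arithmetic regularity style argument, combined with ideas from the Fourier analytic proof of the corners theorem,'' which is your regularity-plus-Jensen plan. Your diagnosis that the regularity must be in a box-type norm adapted to the tripartite corner structure (rather than a na\"ive $L^2$ coset average) is the correct technical point, and indeed goes beyond what the paper itself spells out.
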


The first part of \cref{thm:man} was proved via the following randomized construction. For every $a \in G$, let $X_a, Y_a, Z_a$ be i.i.d.\ uniform random variables in $[0,1]$. Given $W$, construct $A \subseteq G \times G$ by including every $(a,b)$ in $A$ with probability $W(X_a, Y_b, Z_{-a-b})$. It is then easy to check that for every $d \ne 0$, $|S_d|/|G|^2$ has expectation $\triforce(W)$ and is concentrated near its mean by Azuma--Hoeffding/McDiarmid's inequality.

The second part of \cref{thm:man} was proved via an arithmetic regularity style argument, combined with ideas from the Fourier analytic proof of the corners theorem. It seems likely that the result can be extended to all abelian groups via Bohr sets, though as far as we know this has not yet been worked out.

Mandache showed that $\delta^4 \le m(\delta) \lesssim \delta^{3.13}$ and asked what is the optimal exponent on $\delta$. Here we show that the answer is $4$, and that the lower bound can be improved by an arbitrarily large factor. 
(We write $f = O(g)$ and $f \lesssim g$ both to mean $f \le Cg$ for some constant $C$, and $f = \omega(g)$ to mean $f/g \to \infty$.)

\begin{theorem} \label{thm:triforce1}
	$m(\delta) = \delta^{4-o(1)}$ and $m(\delta) = \omega(\delta^4)$ as $\delta \to 0$.
\end{theorem}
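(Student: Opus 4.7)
Both parts of the theorem concern the triforce integral, which by integrating out the primed variables admits the form
\[
\triforce(W) = \int_{[0,1]^3} W_1(y,z)\,W_2(x,z)\,W_3(x,y)\,dx\,dy\,dz,
\]
where $W_i$ denotes the $i$-th marginal of $W$ (each with $\int W_i = \delta$). This exposes $\triforce(W)$ as a tripartite triangle count in weighted graphons, which I would use as the starting point for both bounds.

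For the upper bound $m(\delta) \le \delta^{4-o(1)}$, the plan is to construct a family $W_n \colon [0,1]^3 \to [0,1]$ with $\int W_n = \delta_n \to 0$ and $\triforce(W_n) \le \delta_n^{4-o(1)}$. The prototypical approach is to transfer a Behrend-type 3-AP-free set $S \subseteq \ZZ/N\ZZ$ of density $\delta_0 = e^{-c\sqrt{\log N}}$ to a continuous function on $[0,1]^3$ via a bucketing map $\phi \colon \TT \to \ZZ/N\ZZ$. The naive linear encoding $W(x,y,z) = \one_S(\phi(x)+\phi(y)+\phi(z))$ only yields $\triforce(W) = \delta^3$, since the three primed variables decouple into independent membership constraints on $S$. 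To reach $\delta^{4-o(1)}$, the encoding must couple the three triforce constraints---perhaps by embedding $S$ into a 2-dimensional structured set, or by combining the Behrend-type layer with an additional sparsifying structure---so that any nontrivial triforce configuration forces a 3-AP in $S$, leaving only $O(\delta N) = \delta^{1+o(1)}$ degenerate contributions.

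For the lower bound $m(\delta) = \omega(\delta^4)$, the strategy is to refine Mandache's argument. Mandache's bound $\triforce(W) \ge \delta^4$ presumably arises from one or two applications of Cauchy--Schwarz on the marginal-triangle identity, which become tight only for very rigid configurations of the marginals (e.g., each $W_i$ essentially a rank-1 function). Since the three marginals must arise from a single $W \colon [0,1]^3 \to [0,1]$ with $\int W = \delta$, such rigid configurations are mutually incompatible for small $\delta$. A quantitative stability analysis of Cauchy--Schwarz, combined with a density- or energy-increment argument exploiting this incompatibility, should yield an improvement factor $f(\delta) \to \infty$, perhaps at rate $(\log(1/\delta))^{c}$.

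The main obstacle is the upper bound construction: obtaining the tight exponent $4-o(1)$ requires a delicate encoding that couples the three triforce constraints while avoiding the degenerate ``diagonal'' contributions that dominate naive constructions. The lower bound's challenge is complementary: tracking the Cauchy--Schwarz defect precisely enough to guarantee an $\omega(1)$---not merely constant---improvement over Mandache's $\delta^4$.
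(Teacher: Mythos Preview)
Your marginal reformulation $\triforce(W)=\int W_1(y,z)W_2(x,z)W_3(x,y)$ is correct and is indeed the right lens, but both halves of the plan have real gaps.

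\textbf{Upper bound.} You correctly observe that the naive Behrend encoding only gives $\delta^3$, and you say the fix is to ``couple the three triforce constraints,'' but you never say how. The missing idea is the Ruzsa--Szemer\'edi / diamond-free construction: take a graph $G$ on $n$ vertices in which every edge lies in \emph{exactly one} triangle (such graphs with $e^{-O(\sqrt{\log n})}n^2$ edges come from Behrend's 3-AP-free set via the standard tripartite encoding), and let $H$ be the $3$-uniform hypergraph whose triples are the triangles of $G$. Then the only triforce homomorphisms into $H$ are the degenerate ones collapsing onto a single triple, because extending any edge of a triple to a different triple is forbidden by the ``one triangle per edge'' property. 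This gives triforce density $\Theta(\delta n^2)/n^6=\delta^4/(8d(n)^3)$ with $\delta=2d(n)/n$, hence $\delta^{4-o(1)}$. Your proposal gestures at Behrend but does not arrive at this structure.

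\textbf{Lower bound.} The Cauchy--Schwarz stability route is not just technically awkward---it almost certainly cannot work. The statement $m(\delta)=\omega(\delta^4)$ is essentially equivalent to the diamond-free lemma (every graph in which each edge lies in exactly one triangle has $o(n^2)$ edges): the upper-bound construction above shows that a diamond-free graph with $\Theta(n^2)$ edges would give $m(\delta)=\Theta(\delta^4)$ at some fixed $\delta$. The diamond-free lemma is the Ruzsa--Szemer\'edi $(6,3)$-theorem, whose only known proofs go through the triangle removal lemma or an equivalent regularity-type argument; no Cauchy--Schwarz or spectral argument is known to yield it. The paper's proof accordingly uses removal directly: clean $H$ so every pair in a triple lies in $\ge \delta n$ triples, form the shadow graph $G$, and apply triangle removal to $G$. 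Either $G$ has many triangles (each extending to $(\delta n)^3$ triforces), or removal gives a small hitting set $S$ of edges, and an averaging over ``strong'' edges of $S$ produces $\delta^4 n^6/\epsilon(\delta/\epsilon(\delta))$ triforces. The gain over $\delta^4$ is exactly $1/\epsilon(\cdot)\to\infty$, which is the removal lemma---not a Cauchy--Schwarz defect.
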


It is not hard to show that for any $m(\delta)$ with $m(\delta) = \delta^{4-o(1)}$ and $m(\delta) = \omega(\delta^4)$ is lower bounded by  a convex function $m_{\textrm{convex}} \le m$ with $m_{\textrm{convex}}(\delta) = \delta^{4-o(1)}$ and $m_{\textrm{convex}}(\delta) = \omega(\delta^4)$.

\begin{corollary}[Popular corners] \label{cor:corner}
	For all sequence of finite abelian groups $G_n$ with $|G_n| \to \infty$, one has $M(\delta) \le \delta^{4-o(1)}$. Furthermore, for $G_n = \FF_p^n$ with fixed prime $p$, one has $M(\delta) = \omega(\delta^4)$.
\end{corollary}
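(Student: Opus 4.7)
My plan is to derive both halves of \cref{cor:corner} as essentially immediate consequences of \cref{thm:man} combined with \cref{thm:triforce1} and the convexification remark just above the corollary; only the construction of a convex minorant of $m$ requires any real work.

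\textbf{Upper bound.} The first half of \cref{thm:man} gives $M(\delta)\le m(\delta)$ for every sequence $G_n$ with $|G_n|\to\infty$, and \cref{thm:triforce1} gives $m(\delta)=\delta^{4-o(1)}$; combining these yields $M(\delta)\le\delta^{4-o(1)}$.

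\textbf{Lower bound over $\FF_p^n$.} To apply the second half of \cref{thm:man}, I need to exhibit a convex $m_{\mathrm{convex}}\colon[0,1]\to[0,1]$ with $m_{\mathrm{convex}}\le m$ pointwise and $m_{\mathrm{convex}}(\delta)=\omega(\delta^4)$. Set
\[
\phi(\delta)\;:=\;\inf_{0<\delta'\le\delta}\frac{m(\delta')}{(\delta')^4},
\]
which is nonincreasing in $\delta$, satisfies $m(\delta)\ge\delta^4\phi(\delta)$, and, since $m(\delta)=\omega(\delta^4)$ by \cref{thm:triforce1}, tends to $\infty$ as $\delta\to 0$. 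I would then pass to a continuous nonincreasing minorant $\psi\le\phi$ still tending to $\infty$ but growing no faster than a small constant times $\log(1/\delta)$, for instance by piecewise-linearly interpolating in $\log\delta$ through a sparse sequence $(\delta_k,\log k)$ chosen so that $\phi(\delta_k)\ge\log k$. The elementary second-derivative identity $(\delta^4\log(1/\delta))''=\delta^2(12\log(1/\delta)-7)$ then shows $\delta^4\psi(\delta)$ is convex on some $[0,\delta_0]$; extending by the tangent line at $\delta_0$ up to $\delta=1$ (and capping by $m$ if necessary) produces a convex $m_{\mathrm{convex}}\le m$ on $[0,1]$, since $\delta_0^4\psi(\delta_0)$ is tiny compared with the values of $m$ on $[\delta_0,1]$. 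Then the second half of \cref{thm:man} yields $M(\delta)\ge m_{\mathrm{convex}}(\delta)=\omega(\delta^4)$.

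The only genuinely delicate step is the convexification, which is the ``not hard'' claim acknowledged right before the corollary; the main technical point is balancing the growth of $\psi$ (slow enough so $\delta^4\psi(\delta)$ is convex near $0$, fast enough to still tend to $\infty$), which the sparse interpolation handles.
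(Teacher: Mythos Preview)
Your overall strategy is exactly the paper's: deduce the upper bound from $M\le m$ together with $m(\delta)=\delta^{4-o(1)}$, and the lower bound over $\FF_p^n$ from the second half of Mandache's theorem applied to a convex minorant of $m$ that is still $\omega(\delta^4)$. The paper does not actually prove the existence of such a minorant---it simply asserts this as ``not hard'' in the sentence preceding the corollary---so on that point your sketch already goes further than the paper.

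That said, the particular construction of $\psi$ you propose has a gap. Piecewise-linear interpolation in $\log\delta$ does make $\delta^4\psi(\delta)$ convex on each open piece (by the second-derivative identity you cite), but at a kink the one-sided derivatives of $\delta^4\psi(\delta)$ differ by $\delta^3$ times the jump in the slope of $\psi$ viewed as a function of $u=\log(1/\delta)$; convexity at the kink therefore forces $\psi$ to be \emph{convex} in $u$. However, no convex-in-$u$ function tending to infinity can stay below an arbitrary $\phi\to\infty$ (for instance if $\phi(\delta)=\log\log(1/\delta)$, i.e.\ $\phi$ is concave in $u$, every convex minorant is bounded). So ``sparse piecewise-linear'' cannot work in general. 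The easy repair is to take $\psi$ smooth rather than piecewise linear: writing $g(u)=\psi(e^{-u})$, one has $(\delta^4\psi(\delta))''=\delta^2\bigl(12g-7g'+g''\bigr)$, which is nonnegative for small $\delta$ whenever $g\to\infty$ with $g',g''$ bounded---and such a smooth $g\le\phi$ with $g\to\infty$ is easy to build under any $\phi\to\infty$. Alternatively, one can bypass the construction entirely by taking $m_{\mathrm{convex}}$ to be the lower convex envelope of $m$ on $[0,1]$ and checking directly, using $m(\delta)\ge\delta^4$ for all $\delta$ together with $m(\delta)=\omega(\delta^4)$, that this envelope is still $\omega(\delta^4)$.
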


This result solves Problem 19 from Ben Green's collection of open problems~\cite{GreenOpen}.
For more precise dependencies, the lower bound on $m(\delta)$ is tied to the bound in the triangle removal lemma, and the upper bound on $m(\delta)$ is tied to the bound in the diamond-free lemma, which itself can be bounded using Behrend's construction of 3-AP-free sets~\cite{B}.

The problem of computing $m(\delta)$ is a tripartite analog of the following extremal problem for 3-uniform hypergraphs: determine the minimum density $g(\delta)$ of triforces among 3-uniform hypergraphs with edge density at least $\delta$. Here the \emph{density} of $H$ in $G$ is defined to be the number of homomorphisms from $H$ to $G$ divided by $|V(G)|^{|V(H)|}$. By a standard graph limit argument, this problem is equivalent to the modified extremal problem for $m(\delta)$ where we restrict $W(x,y,z)$ to functions that are symmetric with respect to permutations of its arguments $(x,y,z)$, and as a result, we have $m(\delta) \le g(\delta)$. On the other hand, one has $g(\delta) \lesssim m(\delta)$ by viewing a tripartite 3-uniform hypergraph with $n$ vertices in each part as a 3-uniform hypergraph on $3n$ vertices. Thus, to show \cref{thm:triforce1}, it suffices to show that $g(\delta) = \delta^{4-o(1)}$ and $g(\delta) = \omega(\delta^4)$. In fact, our proof works directly in both settings even without referring to the above relationship between $g$ and $m$.

\begin{theorem} \label{thm:triforce2}
    Let $g(\delta)$ be the minimum triforce density in a 3-uniform hypergraph with edge density at least $\delta$. Then $g(\delta) = \delta^{4-o(1)}$ and $g(\delta) = \omega(\delta^4)$.
\end{theorem}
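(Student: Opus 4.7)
For the upper bound $g(\delta) \leq \delta^{4-o(1)}$, I plan to use a Behrend--Ruzsa--Szemer\'edi construction. Start with a 3-AP-free set $B \subseteq [N]$ of size $|B| \geq N e^{-O(\sqrt{\log N})}$ from Behrend. Form the tripartite graph $\Gamma$ on three copies of $[N]$ whose triangles are $\{(i, i+b, i+2b) : i \in [N],\, b \in B\}$; by 3-AP-freeness of $B$, every edge of $\Gamma$ lies in exactly one triangle. Let $H$ be the 3-uniform hypergraph on $3N$ vertices whose hyperedges are the triangles of $\Gamma$. Then $H$ has edge density $\delta = \Theta(|B|/N^2) = N^{-1-o(1)}$, and the local linearity of $\Gamma$ forces every triforce $(1,1',2,2',3,3')$ of $H$ to collapse to $1=1'$, $2=2'$, $3=3'$ (otherwise some edge of $\Gamma$ would have to lie in two distinct triangles). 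Hence the triforce count equals the number of triangles $N|B|$, giving triforce density $\Theta(N^{-4-o(1)}) = \delta^{4-o(1)}$.

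For the lower bound $g(\delta) = \omega(\delta^4)$, the plan is to apply the triangle removal lemma in a weighted form. Let $W\colon [0,1]^3 \to [0,1]$ represent $H$ with $\int W = \delta$, and introduce the pairwise marginals $f_1(y,z) = \int W(x',y,z)\,dx'$, and analogously $f_2,f_3$; each has $\int f_i = \delta$, and
\[
\triforce(W) = \int f_1(y,z)\, f_2(x,z)\, f_3(x,y)\, dx\,dy\,dz
\]
is the weighted triangle density in the tripartite graph on $[0,1]^{\sqcup 3}$ with edge weights $f_i$. Suppose for contradiction that $\triforce(W_n) \leq c\delta_n^4$ for a fixed $c > 0$ and a sequence $\delta_n \to 0$. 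By the weighted triangle removal lemma, there exist $g_i \leq f_i$ with $\int(f_i - g_i) \leq \eta(c\delta_n^4)$ and $\int g_1 g_2 g_3 = 0$. Let
\[
W'_n := W_n \cdot \mathbf{1}\bigl[(y,z) \in \operatorname{supp}(g_1),\ (x,z) \in \operatorname{supp}(g_2),\ (x,y) \in \operatorname{supp}(g_3)\bigr].
\]
Then $\int W'_n \geq \delta_n - 3\eta(c\delta_n^4)$ and the marginals $f'_i$ of $W'_n$ satisfy $f'_i \leq g_i$, so $\triforce(W'_n) \leq \int g_1 g_2 g_3 = 0$. A projection argument then forces $\int W'_n = 0$, because the support of $W'_n$ lies in the measure-zero set on which $f'_1 f'_2 f'_3$ is nonzero. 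This yields $\delta_n \leq 3\eta(c\delta_n^4)$.

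The main obstacle is extracting the $\omega(\delta^4)$ conclusion from $\delta_n \leq 3\eta(c\delta_n^4)$. A naive substitution of the tower-type bound for the triangle removal function $\eta$ is insufficient, since $\eta(t)$ is not $o(t^{1/4})$ (indeed $\eta(t) \geq e^{-O(\sqrt{\log 1/t})}$ by the Ruzsa--Szemer\'edi construction). The refinement I expect to use is to apply triangle removal after renormalization, rescaling the tripartite weighted graph so that the effective triangle density is the relative ratio $\triforce(W)/\delta^3 \leq c\delta$ rather than the absolute $c\delta^4$. This places the lower bound on $g(\delta)$ in direct correspondence with the quantitative strength of the triangle removal lemma at edge density $\delta$, matching the heuristic connection with the diamond-free lemma that drives the upper bound. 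Making this rescaling precise, and verifying that the surviving $W'_n$ can still be arranged to have $\triforce(W'_n) = 0$, is the technical heart of the argument.
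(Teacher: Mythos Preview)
Your upper bound construction is exactly the paper's: take a diamond-free graph coming from a Behrend set, let $H$ be its triangle hypergraph, and observe that all triforces degenerate. So that half is fine.

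For the lower bound, your identity $\triforce(W)=\int f_1f_2f_3$ and the implication ``$\triforce(W')=0\Rightarrow\int W'=0$'' are both correct, and you correctly diagnose that the na\"ive removal bound $\delta\le 3\eta(c\delta^4)$ is vacuous. The problem is that your proposed fix runs into the \emph{same} wall. After cleaning so that each $f_i\ge\delta$ on its support, the support graph $G$ has triangle density at most $\triforce(W)/\delta^3\le c\delta$, and removal gives a set $S$ of measure at most $\eta(c\delta)$ whose deletion kills all triangles. Repeating your own argument (restrict $W$ to the pairs surviving removal, get $\triforce(W')=0$, hence $\int W'=0$) now yields only
\[
\tfrac{\delta}{2}\ \le\ \int W\ \le\ \sum_i\int_{S_i}f_i\ \le\ 3\eta(c\delta),
\]
since $f_i\le 1$. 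But $\eta(t)\ge e^{-O(\sqrt{\log(1/t)})}\gg t$ by Ruzsa--Szemer\'edi, so $\eta(c\delta)\gg\delta$ and there is again no contradiction. Rescaling the triangle density from $c\delta^4$ down to $c\delta$ is a genuine improvement, but it is not enough: the mass you lose to removal is still far larger than $\delta$.

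What the paper does that you are missing is a pigeonhole on top of the removal set. After cleaning and after removal (applied with the more aggressive threshold $\delta/\epsilon(\delta)$, via a dichotomy on the triangle count of $G$), every surviving triple of the hypergraph contains some pair in $S$. Since $\int W\asymp\delta$ and $|S|\le\epsilon(\delta/\epsilon(\delta))$, an averaging argument produces \emph{strong} pairs $e\in S$ with $f$-value at least $\delta/|S|$, not merely at least $\delta$. A triple containing a strong pair then extends to at least $(\delta/|S|)\cdot\delta\cdot\delta$ triforces (one factor per primed vertex), and there are $\asymp\delta$ such triples; multiplying gives $\triforce(W)\gtrsim \delta^4/|S|\ge \delta^4/\epsilon(\delta/\epsilon(\delta))=\omega(\delta^4)$. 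In short, the missing idea is not to force $\int W'=0$ but to exploit that all the mass of $W$ is funneled through the small set $S$, making some edges of $S$ carry codegree $\gg\delta$, which boosts the triforce count by the extra factor $1/|S|$.
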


The natural generalization to $k$-uniform hypergraphs holds with essentially the same proof. We state it explicitly below. Let a \emph{$k$-force} be the $k$-uniform hypergraph with $2k$ vertices and edges $1'2\cdots k$,$12'\cdots k$, \dots, $12\cdots k'$.

\begin{theorem}
\label{thm:kforce}
    Fix $k\ge 3$. Let $g_k(\delta)$ be the the minimum $k$-force density in a $k$-uniform hypergraph with edge density at least $\delta$. Then $g_k(\delta) = \delta^{k+1-o(1)}$ and $g_k(\delta) = \omega(\delta^{k+1})$.
\end{theorem}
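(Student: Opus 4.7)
The plan is to observe that the proof of \cref{thm:triforce2} uses no feature specific to $3$-uniformity and carries over essentially verbatim to all $k \ge 3$. The $k$-force is the exact $k$-uniform analogue of the triforce---$k$ edges obtained from a template $\{v_1, \dots, v_k\}$ by replacing one vertex at a time by its primed copy---and both halves of the proof of \cref{thm:triforce2} exploit only this ``replace one vertex'' structure, not the value $k = 3$.

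For the upper bound $g_k(\delta) \le \delta^{k+1-o(1)}$, the plan is to generalize the Behrend-type construction used for the triforce. That construction builds a $3$-partite $3$-uniform hypergraph whose edges encode membership of a prescribed linear combination of coordinates in a $3$-AP-free set $S \subseteq \ZZ/N\ZZ$, arranged so that a triforce would force a nontrivial $3$-AP in $S$ that is ruled out by Behrend. The same recipe with $k$ parts in place of $3$, and the $k$-force in place of the triforce, produces a $k$-partite $k$-uniform hypergraph of edge density $\delta$ and $k$-force density $\delta^{k+1-o(1)}$, using a Behrend set $S$ with $|S|/N = \delta^{1+o(1)}$. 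The only bookkeeping change from the $k = 3$ case is counting the degenerate ``trivial'' $k$-forces for general $k$.

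For the lower bound $g_k(\delta) = \omega(\delta^{k+1})$, the plan is to replace the triangle removal lemma used in the $k = 3$ proof by the $k$-uniform hypergraph removal lemma of Gowers and of Nagle--R\"odl--Schacht--Skokan, which gives comparable super-polynomial savings as $\delta \to 0$. The structure of the argument is unchanged: start with a hypergraph of edge density $\delta$ and $k$-force density at most $C\delta^{k+1}$, apply removal at tolerance $\delta/2$ to delete $\le (\delta/2)n^k$ edges and eliminate every $k$-force, and then derive a contradiction from the fact that $k$-force-free $k$-uniform hypergraphs on $n$ vertices cannot have density bounded away from zero---a statement equivalent, via a standard reduction, to the multidimensional Szemer\'edi theorem of Furstenberg--Katznelson in dimension $k-1$.

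The main anticipated obstacle is the clean verification of the ``$k$-force-free implies sparse'' statement at general $k$. For $k = 3$ it is Solymosi's diamond-free lemma, equivalently the Ajtai--Szemer\'edi corners theorem. For $k \ge 4$ one must produce, from any dense $(k-1)$-dimensional corner-free set $A \subseteq G^{k-1}$, a $k$-partite $k$-uniform hypergraph whose edges are in bijection with the elements of $A$ and which contains no $k$-force; the multidimensional Szemer\'edi theorem then forces $|A|/|G|^{k-1} \to 0$. Matching the primed-vertex structure of the $k$-force to the coordinate axes of the corner requires some care but is routine. With this input, the proof of \cref{thm:kforce} transfers from the $k = 3$ case with no further changes.
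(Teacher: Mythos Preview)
Your lower-bound plan has a genuine quantitative gap. You propose: assume $g_k(\delta)\le C\delta^{k+1}$, apply the $k$-force removal lemma ``at tolerance $\delta/2$'' to delete at most $(\delta/2)n^k$ edges, and then invoke ``$k$-force-free implies sparse''. But the removal lemma only lets you delete $(\delta/2)n^k$ edges when the $k$-force density is below the removal threshold $\eta(\delta/2)$, and there is no reason for $C\delta^{k+1}\le\eta(\delta/2)$ to hold---that inequality is exactly a polynomial bound in the hypergraph removal lemma, which is wide open. Run honestly, your argument yields only $g_k(\delta)\ge\eta(\delta/2)$, which with current removal bounds is far below $\delta^{k+1}$, not $\omega(\delta^{k+1})$.

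The paper's argument (the direct generalization of \cref{lem:g-upperbd}) avoids this by \emph{not} applying removal to $H$ and the $k$-force. Instead it cleans $H$ to $H'$ so that every $(k-1)$-set lying in an edge lies in at least $\delta n$ edges, passes to the $(k-1)$-uniform shadow $G$, and applies the \emph{simplex} removal lemma to $G$. The crucial point is the explicit extension count: every $(k-1)$-simplex of $G$, and every edge of $H'$ through a ``strong'' $(k-1)$-face, extends to $\gtrsim(\delta n)^k$ $k$-forces. This is where the factor $\delta^{k+1}$ comes from; the removal lemma contributes only the $1/\epsilon\bigl(\delta/\epsilon(\delta)\bigr)=1/o(1)$ improvement in the denominator that upgrades $\Omega(\delta^{k+1})$ to $\omega(\delta^{k+1})$. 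Your scheme never isolates this $\delta^{k+1}$ extension factor, so the removal lemma has to carry the entire quantitative burden, which it cannot.

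Two smaller points. First, ``$k$-force-free implies sparse'' is much easier than multidimensional Szemer\'edi (and your reduction goes the wrong direction anyway): if every $(k-1)$-face of an edge has codegree $\ge 2k$ one builds an injective $k$-force greedily, so a $k$-force-free $H$ has every edge containing a $(k-1)$-face of codegree $<2k$, hence $O(n^{k-1})$ edges. Second, for the upper bound at general $k$, a $3$-AP-free set is not the right input; the analogue of \cref{lem:g-lowerbd} needs a $(k-1)$-uniform hypergraph in which each edge lies in exactly one $(k-1)$-simplex, which one gets from a Behrend-type $(k-2)$-dimensional \emph{corner}-free set, not from a $3$-AP-free set.
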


One might suspect that the $k$-force density result would lead to consequences for popular $(k-1)$-dimensional corners. However, this is not the case, as our second main result below says.

\begin{theorem} \label{thm:3d-corner}
Let $0 < \delta < 1/2$. For all sufficiently large $N$, there exists $A \subseteq [N]^3$ with $|A| \ge \delta N^3$ such that for all nonzero integer $d$, there are at most $\delta^{c \log (1/\delta)} N^3$ triples $(x,y,z)$ with $(x,y,z), (x+d,y,z),(x,y+d,z),(x,y,z+d) \in A$. Here $c > 0$ is some absolute constant.
\end{theorem}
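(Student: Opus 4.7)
Plan. My approach would be a Behrend-style construction. Let $B \subseteq \{-(N-1), \ldots, N-1\}$ be a symmetric 3-AP-free set of density at least $\sqrt{8\delta}$, obtained from Behrend's theorem. I would then set
\[
A := \{(x, y, z) \in [N]^3 : x - y \in B \text{ and } x - z \in B\}.
\]
The density estimate $|A| \ge \delta N^3$ follows by an equidistribution count: for each pair $(u, v) \in (B \cap [-N/4, N/4])^2$ there are $\Theta(N)$ triples $(x,y,z) \in [N]^3$ with $(x-y, x-z) = (u, v)$, yielding $|A| \gtrsim (|B|/(2N))^2 N^3 \ge \delta N^3$ by the density assumption on $B$.

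Next, I would verify that $A$ contains no 3D corners with $d \ne 0$. Setting $u_0 = x - y$ and $v_0 = x - z$, the four membership conditions for a 3D corner $(x, y, z), (x+d, y, z), (x, y+d, z), (x, y, z+d) \in A$ force
\[
\{u_0 - d, u_0, u_0 + d\} \subseteq B \quad \text{and} \quad \{v_0 - d, v_0, v_0 + d\} \subseteq B,
\]
two 3-APs in $B$ with common difference $d$. Since $B$ is 3-AP-free, this forces $d = 0$, contradicting $d \ne 0$. Hence $A$ has \emph{no} 3D corners with $d \ne 0$, trivially beating the bound $\delta^{c\log(1/\delta)} N^3$.

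The main obstacle is handling the full range of $N$ required by the theorem. Behrend's construction provides a 3-AP-free set of density $\Omega(\sqrt{\delta})$ only up to the threshold scale $N_0 = \exp(\Theta(\log^2(1/\delta)))$; beyond $N_0$ the Behrend density decays below $\sqrt{\delta}$. A natural fix is to tile: take $B' \subseteq [N_0]$ to be 3-AP-free of density $\sqrt{\delta}$ and set $B = B' + N_0 \ZZ$. This recovers density $\delta$ for $A$ and preserves 3-AP-freeness for $d$ not a multiple of $N_0$. However, for $d = j N_0$ the tile-scale 3-APs in $B$ have $\sim \sqrt{\delta} N$ centers each, producing $\sim \delta N^3$ 3D corners per such $d$—which exceeds the target $\delta^{c\log(1/\delta)} N^3$ for small $\delta$. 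Resolving this bottleneck likely requires a more refined multi-scale construction, nesting 3-AP-free conditions at several scales or invoking an additional algebraic structure in the spirit of the Ruzsa construction for $k$-APs with $k \ge 5$ cited earlier; the delicate balancing of the per-scale density loss against the corner-count bound is the main quantitative challenge.
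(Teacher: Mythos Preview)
Your core observation---that $A = \{(x,y,z) \in [N]^3 : x-y,\, x-z \in B\}$ with $B$ 3-AP-free has \emph{no} 3D corners---is correct and clean. But the obstacle you identify is fatal, and more fundamental than you state: it is not merely that Behrend's bound decays, but that Roth's theorem forbids \emph{any} 3-AP-free $B \subseteq \{-(N{-}1),\ldots,N{-}1\}$ of density $\sqrt{\delta}$ once $N$ is large enough relative to $\delta$. Since the theorem demands a construction valid for \emph{all} sufficiently large $N$, your direct approach is impossible in that regime, and you correctly diagnose that the periodized version produces $\Theta(\delta N^3)$ corners at differences $d \in N_0\ZZ$. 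No finite layering of 3-AP-free sets resolves this, because Roth's theorem bites at every scale.

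The paper circumvents this in two moves. First, it replaces the pair of linear maps $x-y$, $x-z$ by the single quadratic $f(x,y,z) = (x-y)(x+y-2z)$, which satisfies $f(x{+}d,y,z)+f(x,y{+}d,z)+f(x,y,z{+}d)=3f(x,y,z)$; a corner then forces the four $f$-values to solve $a+b+c=3w$ inside a Behrend-type set $\Lambda$. Second---and this is how the scale problem is solved---rather than taking $\Lambda \subseteq [N]$, the paper fixes $\Lambda$ of size depending only on $\delta$ (with $L = \exp(c\log^2(1/\delta))$), embeds it as a set $B \subseteq \TT$, and pulls back along $n \mapsto n\alpha \bmod 1$ for a carefully chosen irrational $\alpha$. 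Weyl equidistribution then gives density $\gg \delta$ for \emph{every} large $N$, and the Diophantine properties of $\alpha$ (engineered via continued fractions so that good rational approximants $p/q$ have $q \asymp N$ with all prime factors exceeding $L$) ensure that a corner forces $\norm{2\alpha d(a_1-a_2)}_{\RR/\ZZ}$ to be small, confining one coordinate difference to $O(N/L)$ values. This yields at most $O(N^3/L) = \delta^{c\log(1/\delta)}N^3$ corners per $d$---not zero, but enough. Your closing instinct toward ``Ruzsa-style algebraic structure'' is on target; the irrational rotation is precisely the device that lets a single Behrend-scale set serve all $N$.
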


As we will see in the proof of \cref{thm:3d-corner}, this construction is closely related to the construction in \cite{BHKR} of a set lacking popular differences for 5-APs. We also extend the counterexample of \cite{BHKR} to all $5$-point patterns in $\mathbb{N}$. 

\begin{theorem} \label{thm:5-point}
Let $0 < \delta < 1/2$ and fix five distinct integers $a_1,a_2,a_3,a_4,a_5$. For all sufficiently large $N$, there exists $A \subseteq [N]$ with $|A| \ge \delta N$ such that for all nonzero integer $d$, there are at most $\delta^{c \log (1/\delta)} N$ values of $x \in \ZZ$ such that  $x+a_1d$, $x+a_2d$, $x+a_3d$, $x+a_4d$, $x+a_5d \in A$. Here $c>0$ is a constant depending only on $(a_1,a_2,a_3,a_4,a_5)$.
\end{theorem}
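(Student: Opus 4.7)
The plan is to extend the counterexample of Bergelson--Host--Kra--Ruzsa \cite{BHKR} for $5$-APs to arbitrary $5$-point patterns by isolating the algebraic identity underlying the construction and verifying that it admits a natural generalization.

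First, I record the required algebraic fact. For any five distinct integers $a_1,\dots,a_5$, the five points impose four linearly independent conditions on cubic polynomials (the Vandermonde matrix $(a_j^i)_{0 \le i \le 3,\, 1 \le j \le 5}$ has rank $4$), so there exists a nonzero integer vector $(\lambda_1,\dots,\lambda_5)$ with $\sum_{j=1}^5 \lambda_j a_j^i = 0$ for $i=0,1,2,3$; equivalently, $\sum_j \lambda_j P(a_j)=0$ for every polynomial $P \in \mathbb{R}[y]$ of degree at most $3$. Specializing to $P(y) = (x + y d)^3$ (cubic in $y$ with coefficients in $\mathbb{Z}[x,d]$) yields the key polynomial identity
\[
\sum_{j=1}^5 \lambda_j\,(x + a_j d)^3 \;=\; 0 \qquad \text{in } \mathbb{Z}[x,d].
\]
For the $5$-AP pattern $a_j = j-1$, this specializes to the classical identity with $\lambda=(1,-4,6,-4,1)$ driving Ruzsa's appendix to \cite{BHKR}.

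Next, I mimic Ruzsa's construction with this identity in place of the $5$-AP one. The set $A$ is defined by a cubic phase condition of the form $A = \{n \in [N] : \alpha n^3 \bmod 1 \in S\}$ for a suitable irrational $\alpha$ (chosen so that the Weyl sums controlling the joint distribution of $\alpha(x + a_j d)^3$ are well-behaved for each fixed $d$), together with a set $S \subseteq \mathbb{T}$ of measure approximately $\delta$ of the sparse, Behrend-like type used in \cite{BHKR}. Weyl equidistribution then gives $|A| \ge \delta N$ for $N$ sufficiently large. For each fixed $d \ne 0$, the displayed identity forces the $5$-tuple $(\alpha(x + a_j d)^3 \bmod 1)_{j=1}^5$ to lie on the codimension-one subtorus $\{(t_j) \in \mathbb{T}^5 : \sum_j \lambda_j t_j \equiv 0 \bmod 1\}$. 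Combined with the Behrend-type sparsity of $S$, the BHKR Fourier-analytic argument then bounds the number of valid $x$ by $\delta^{c \log(1/\delta)} N$ uniformly in $d$, where $c>0$ depends only on the integer vector $\lambda$, and hence only on the pattern $(a_1,\dots,a_5)$.

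The main obstacle is essentially bookkeeping: one must check that no step of BHKR's analysis exploits any feature of the $5$-AP beyond the polynomial identity $\sum_j \lambda_j (x + a_j d)^3 = 0$ and the size of the coefficients $\lambda_j$. Once this is verified, the quantitative bound transfers to arbitrary $5$-point patterns with no change beyond allowing the absolute constant to depend on $\lambda$, i.e., on $(a_1,\dots,a_5)$.
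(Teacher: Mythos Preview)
There is a genuine gap. Your proposal is built on a misreading of Ruzsa's appendix to \cite{BHKR}: that construction uses \emph{squares}, not cubes. The set is $A=\{n:\alpha n^2 \bmod 1\in B\}$, and the relevant identities are the two relations $(1,-3,3,-1)$ annihilating quadratics (applied to consecutive four-term windows), not the single relation $(1,-4,6,-4,1)$ annihilating cubics. The paper's proof of the theorem follows this quadratic route: for a general $5$-point pattern $\bm a$, one defines a \emph{quadratic configuration} $QC(\bm a)=(P(a_1),\dots,P(a_5))$ for nonconstant quadratic $P$, observes that such configurations are cut out by \emph{two} linear relations among five values (\cref{lem:qc} with $k=5$), builds a Behrend-type set $\Lambda$ avoiding all $QC(\bm a)$ (\cref{lem:behrend-modified-2}), and then shows that if all five phases $\alpha(n+a_jd)^2$ land in the associated set $B$, the two relations force the corresponding digits $U_1,\dots,U_5$ in $\Lambda$ to be equal, after which an explicit three-term identity yields the Diophantine constraint $\|\Theta_2\alpha nd\|_{\RR/\ZZ}<\Theta_3/L$ (\cref{lem:alpha-transfer-2}). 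The rest is the same continued-fraction argument as for $3$-dimensional corners.

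Your cubic route breaks at the Behrend step. With cubes you get only a single linear relation $\sum_j\lambda_j t_j\equiv 0$ in $\TT^5$, i.e.\ a codimension-one subtorus. That alone does not make $S^5$ intersect the subtorus in a set of density $\delta^{c\log(1/\delta)}$; generically the intersection has density about $|S|^4$. To push further you would need a Behrend-type $\Lambda$ with no nontrivial solutions to the single $5$-term equation $\sum_j\lambda_j x_j=0$. But the $\lambda_j$ arising from your Vandermonde kernel (e.g.\ $(1,-4,6,-4,1)$ for $5$-APs) have mixed signs in a non-convex way, and as the paper itself notes in the motivation section, it is open whether Behrend-style sets of size $L^{1-o(1)}$ can avoid even the $4$-term equation $2x+2y=3w+z$; the $5$-term analogues are no easier. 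So ``the BHKR Fourier-analytic argument then bounds\dots'' is not a step you can import---BHKR uses precisely the extra relation that the quadratic setting provides. The fix is to switch to the square phase $\alpha n^2$ and carry two relations, exactly as the paper does.
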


A simple padding and projection of these constructions gives analogous constructions for avoiding popular differences for all patterns with affine dimension at least 3 or at least $5$ points, in particular, $k$-dimensional corners for all $k \ge 3$.

\begin{corollary} \label{cor:3d-pattern}
Fix a finite subset $T \subseteq \ZZ^k$ with affine dimension at least 3 or at least $5$ distinct points. Let $0 < \delta < 1/2$. For all sufficiently large $N$, there exists $A \subseteq [N]^k$ with $|A| \ge \delta N^k$ such that for all nonzero integers $d$, there are at most $\delta^{c \log(1/\delta)} N^k$ points $x \in \ZZ^k$ such that $x + d\cdot T := \{x + dt : t \in T\} \subseteq A$, where $c = c_T > 0$ is a constant.
\end{corollary}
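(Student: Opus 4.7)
The plan is to reduce the corollary to \cref{thm:3d-corner} (when $T$ has affine dimension at least $3$) or \cref{thm:5-point} (when $T$ has at least $5$ points) by pulling back the relevant low-dimensional construction along an integer-linear map adapted to $T$. In Case 1 (affine dim $\geq 3$), pick four affinely independent points $t_0, t_1, t_2, t_3 \in T$ and set $v_i := t_i - t_0$ for $i=1,2,3$. These are $\mathbb{Q}$-linearly independent, so by taking a rational dual basis and clearing denominators, there exist $w_1, w_2, w_3 \in \mathbb{Z}^k$ and $C \in \mathbb{Z}_{>0}$ with $w_i \cdot v_j = C \delta_{ij}$. Define $\pi \colon \mathbb{Z}^k \to \mathbb{Z}^3$ by $\pi(y) := (w_1 \cdot y, w_2 \cdot y, w_3 \cdot y)$, so that $\pi(t_j) - \pi(t_0) = C e_j$. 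In Case 2 ($|T| \geq 5$), pick five distinct points $t_1, \ldots, t_5 \in T$; the set of integer linear functionals $\ell$ with $\ell(t_i) = \ell(t_j)$ for some $i \neq j$ is a finite union of proper sublattices of $\mathbb{Z}^k$, so one may choose $\ell \colon \mathbb{Z}^k \to \mathbb{Z}$ with $a_i := \ell(t_i)$ all distinct.

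Let $\phi$ denote $\pi$ (or $\ell$) and $r \in \{3,1\}$ the corresponding target dimension. Then $\phi([N]^k)$ lies in a box $[M]^r$ with $M = O_T(N)$. Apply \cref{thm:3d-corner} or \cref{thm:5-point} (with parameters $a_1, \ldots, a_5$) at density $\delta' := \delta / C_0$, for a constant $C_0 = C_0(T)$ to be chosen, obtaining $A_0 \subseteq [M]^r$ with $|A_0| \geq \delta' M^r$ such that for every nonzero integer $d'$ the number of base points of the relevant pattern with common difference $d'$ is at most $(\delta')^{c' \log(1/\delta')} M^r$. Set $A := \phi^{-1}(A_0) \cap [N]^k$. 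A positive proportion of $[N]^k$ maps into a central sub-box of $[M]^r$ over which fibers of $\phi$ have size $\gtrsim_T N^{k-r}$; choosing $A_0$ inside this sub-box yields $|A| \gtrsim_T \delta' N^k$, which is at least $\delta N^k$ after a suitable choice of $C_0$.

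For the popular-difference bound, suppose $x + d T \subseteq A$ for some nonzero $d \in \mathbb{Z}$. Applying $\phi$: in Case 1 the four points $\phi(x), \phi(x) + dC e_1, \phi(x) + dC e_2, \phi(x) + dC e_3$ all lie in $A_0$, a $3$D corner with nonzero common difference $dC$; in Case 2 the five points $\phi(x) + d a_i$ ($i=1,\ldots,5$) all lie in $A_0$, the $5$-point pattern with common difference $d$. Hence the number of valid $\phi(x)$ is at most $(\delta')^{c' \log(1/\delta')} M^r$, and multiplying by the maximum fiber size $O_T(N^{k-r})$ bounds the number of valid $x \in [N]^k$ by $\delta^{c \log(1/\delta)} N^k$ after absorbing $C_0$ into $c$. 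The main subtlety is the density accounting between $\delta$ and $\delta'$ together with the nonuniformity of $\phi$'s fibers over $[N]^k$; both are routine since $M = O_T(N)$.
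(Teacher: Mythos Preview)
Your proposal is correct and follows essentially the same strategy as the paper: reduce to \cref{thm:3d-corner} or \cref{thm:5-point} via an integer-linear map that carries the pattern $T$ to the 3-dimensional corner or a 5-point one-dimensional pattern, then transfer both the density and the popular-difference bound through fibers of size $\Theta_T(N^{k-r})$. The only cosmetic difference is that in the affine-dimension-$\ge 3$ case the paper uses a push-forward (apply a linear map $\Phi$ to $A\times[N]^{k-3}$) while you use the dual pull-back $\pi^{-1}(A_0)\cap[N]^k$; these are equivalent reductions.
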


\section{Minimum triforce densities}

Now we begin the proof of \cref{thm:triforce2}. We present the proof in the way that highlights the dependencies between the bounds in Roth's theorem, the diamond-free lemma, the triangle removal lemma, and the minimum triforce density function $g(\delta)$.

Let $\epsilon(\delta)$ be the minimum $\epsilon$ such that every graph on $n$ vertices with at most $\delta n^3$ triangles can be made triangle-free by removing at most $\epsilon n^2$ edges. The triangle removal lemma~\cite{RSz} states that $\epsilon(\delta) \to 0$ as $\delta \to 0$. Note that, by definition, $\epsilon(\delta)$ is a monotonically decreasing function. 

A \emph{diamond-free} graph is a graph in which each edge is in exactly one triangle. Define $d(n)$ so that $d(n)n^2$ is the maximum number of edges of a diamond-free graph on $n$ vertices. The diamond-free lemma states that $d(n)=o(1)$. The diamond-free lemma is an easy consequence of the triangle removal lemma. Indeed, in a diamond-free graph on $n$ vertices with $d(n)n^2$ edges, the number of triangles is $d(n)n^2/3<n^2 = \delta n^3$ with $\delta=1/n$, so we can remove $\epsilon n^2$ edges to make it triangle-free with $\epsilon= \epsilon(1/n)$. However, in such a graph we need to delete an edge from each of these $d(n)/3$ edge-disjoint triangles, so $d(n) \leq 3\epsilon(1/n)$.

Let $r(n)$ be the maximum possible density of a subset of $\ZZ/N\ZZ$ without a $3$-term arithmetic progression. By Roth's theorem~\cite{Roth}, $r(n)=o(1)$, and Behrend's construction~\cite{B} implies that $r(n) \ge e^{-O(\sqrt{\log n})}$. The proof of Roth's theorem from the diamond-free lemma gives a construction of a graph on $3n$ vertices with $3r(n)n^2$ edges, and each edge is in exactly one triangle. Indeed, every 3-AP-free $A \subset \ZZ/N\ZZ$ gives rise to the following diamond-free graph: take the tripartite graph with vertex parts $X, Y, Z$ with $X=Y=Z=\ZZ/N\ZZ$ and edges $(x,x+a) \in X\times Y$, $(y,y+a) \in Y \times Z$ and $(x,x+2a) \in X \times Z$, ranging over all $a \in A,x \in X, y \in Y, z \in Z$. Thus, $3r(n)n^2 \leq d(3n)(3n)^2$, or equivalently, $d(3n) \geq r(n)/3 \ge e^{-O(\sqrt{\log n})}$.

The next lemma implies the claimed upper bound on $g(\delta)$. To avoid confusion, we use ``triple'' to refer to an edge of a 3-uniform hypergraph, ``edge'' to refer to an edge of a graph, and ``triangle'' to refer to a triangle in a graph.

\begin{lemma}\label{lem:g-lowerbd}
For every integer $n \geq 3$ and $\delta=2d(n)/n$, we have $\displaystyle g(\delta) \leq \frac{\delta^4}{8d(n)^3}$. 
\end{lemma}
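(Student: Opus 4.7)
The plan is to construct an explicit hypergraph witness and read off the two density computations. Let $D$ be a diamond-free graph on $n$ vertices with $d(n)n^2$ edges; since every edge of $D$ lies in a unique triangle and each triangle has three edges, $D$ has exactly $d(n)n^2/3$ triangles. I would take $H$ to be the 3-uniform hypergraph on $V(D)$ whose triples are precisely the vertex sets of the triangles of $D$.

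The edge density of $H$ is then $6(d(n)n^2/3)/n^3 = 2d(n)/n = \delta$, matching the hypothesis. For the triforce density, the claim is that the homomorphism count from the triforce into $H$ is exactly the number of ordered triangles of $D$, namely $2d(n)n^2$. This is where the diamond-free property enters. Any homomorphism $\phi\colon\{1,2,3,1',2',3'\}\to V(H)$ makes $\{\phi(1),\phi(2),\phi(3')\}$, $\{\phi(1),\phi(2'),\phi(3)\}$, and $\{\phi(1'),\phi(2),\phi(3)\}$ each a triangle of $D$. Reading off the edges, $\phi(1)\phi(2)$, $\phi(1)\phi(3)$, and $\phi(2)\phi(3)$ are all edges of $D$, so $\{\phi(1),\phi(2),\phi(3)\}$ is itself a triangle of $D$; by diamond-freeness this is the \emph{unique} triangle containing each of these three edges, which forces $\phi(3')=\phi(3)$, $\phi(2')=\phi(2)$, $\phi(1')=\phi(1)$. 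Conversely, every ordered triangle of $D$ yields such a (degenerate) homomorphism.

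Combining, the triforce density of $H$ is $2d(n)n^2/n^6 = 2d(n)/n^4 = \delta^4/(8d(n)^3)$, proving the bound. The only real content is the construction itself: once one realizes that the triangle-hypergraph of a diamond-free graph collapses any triforce to a single triangle, the density computations are immediate, so there is no serious obstacle beyond writing the construction down correctly.
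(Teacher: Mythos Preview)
Your proof is correct and follows exactly the paper's approach: build the triple system $H$ from the triangles of an extremal diamond-free graph, observe that diamond-freeness forces every triforce homomorphism to collapse onto a single triple, and read off the two densities. Your write-up is slightly more explicit about why the collapse occurs (extracting the three edges $\phi(1)\phi(2),\phi(1)\phi(3),\phi(2)\phi(3)$ and invoking uniqueness of the triangle through each), whereas the paper simply asserts the homomorphism characterization, but the content is identical.
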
 

\begin{proof}
Let $G$ be a graph with $n$ vertices and $d(n)n^2$ edges such that every edge is in exactly one triangle. Let $H$ be the $3$-uniform hypergraph on the same vertex set as $G$ in which the triples of $H$ are precisely the triangles of $G$. The triple density of $H$ is $\delta=2d(n)/n$. The only homomorphisms from the triforce to $H$ map the vertices of the triforce to those of a single triple of $H$, so the triforce density is $2d(n)n^2/n^6=\frac{\delta^4}{8d(n)^3}$. 
\end{proof}

From the above lemma and the Behrend bound described above, we get $g(\delta) \leq \delta^{4-O(1/\sqrt{\log (1/\delta)})}$. In particular, $g(\delta)=\delta^{4-o(1)}$.  

The next lemma implies the claimed lower bound on $g(\delta)$.

\begin{lemma}\label{lem:g-upperbd}
For every $\delta > 0$, we have $\displaystyle g(6\delta) \geq \frac{\delta^4}{3\epsilon\left(\delta/\epsilon(\delta)\right)}$. 
\end{lemma}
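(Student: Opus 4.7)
The plan is to apply the triangle removal lemma (TRL) to an auxiliary graph built from the ``high-degree pair'' structure of $H$. Suppose $H$ is a 3-uniform hypergraph on $n$ vertices with edge density at least $6\delta$, so $|E(H)| \geq \delta n^3$ for $n$ large. For each pair $\{u,v\}$, write $N(u,v) := \{w : \{u,v,w\} \in E(H)\}$ for its link; note that $\sum_{\{u,v\}} |N(u,v)| = 3|E(H)|$. I will set threshold $\tau := \delta n$ and define $G$ on $V(H)$ by $\{u,v\} \in E(G) \iff |N(u,v)| \geq \tau$. Call a triple of $H$ \emph{good} if all three of its pairs lie in $E(G)$. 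Since
\[
\sum_{\{u,v\} \notin E(G)} |N(u,v)| \leq \tau \binom{n}{2} \leq \delta n^3/2,
\]
and each triple having any pair not in $E(G)$ is counted at least once on the left, at most $\delta n^3/2$ triples have a low-degree pair, so at least $\delta n^3/2$ are good. Each good triple is a triangle in $G$.

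Next, the triforce count equals $\sum_{(a,b,c) \in V^3} |N(a,b)||N(a,c)||N(b,c)|$, and restricting the sum to ordered triangles of $G$ gives the lower bound $6 T(G) \tau^3 = 6\delta^3 n^3 \cdot T(G)$, where $T(G)$ is the number of triangles of $G$. I then apply TRL to $G$: there is $R \subseteq E(G)$ with $|R| \leq \epsilon(T(G)/n^3)\,n^2$ such that $G \setminus R$ is triangle-free. Every good triple has at least one pair in $R$, so
\[
n|R| \geq \sum_{(u,v) \in R} |N(u,v)| \geq \#\{\text{good triples}\} \geq \delta n^3/2,
\]
giving $|R| \geq \delta n^2 /2$ and hence $\epsilon(T(G)/n^3) \geq \delta/2$.

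The final step is to invert this bound on $\epsilon(T(G)/n^3)$ into a lower bound of the form $T(G)/n^3 \geq \delta/(6\epsilon(\delta/\epsilon(\delta)))$, which combined with the triforce count lower bound will yield the claimed $\delta^4/(3\epsilon(\delta/\epsilon(\delta)))$. The hard part will be this inversion: translating $\epsilon(T(G)/n^3) \geq \delta/2$ into the nested form with $\delta/\epsilon(\delta)$ as argument requires exploiting the relationship $\epsilon(\delta/\epsilon(\delta)) \geq \delta$, which in turn follows from the trivial lower bound $\epsilon(y) \gtrsim y$ (from disjoint-triangle constructions) together with monotonicity of $\epsilon$; concretely, if $T(G)/n^3 < \delta/(6\epsilon(\delta/\epsilon(\delta)))$, then by monotonicity $\epsilon(T(G)/n^3) < \epsilon(\delta/\epsilon(\delta))$, and the fixed-point structure contradicts $\epsilon(T(G)/n^3) \geq \delta/2$. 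With this inversion, a single TRL application combined with the good-triple counting suffices to conclude.
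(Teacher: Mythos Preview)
Your setup is fine and matches the paper's: the graph $G$ on high-codegree pairs, the count of good triples, and the bound $\#\triforce \ge 6\delta^3 n^3\, T(G)$ are all correct. The flaw is in the ``inversion'' step. From $\epsilon(T(G)/n^3) \ge \delta/2$ you try to deduce $T(G)/n^3 \ge \delta/(6\epsilon(\delta/\epsilon(\delta)))$, but this is not valid. Your own argument derives $\epsilon(T(G)/n^3) \le \epsilon(\delta/\epsilon(\delta))$ under the negated hypothesis and then asserts a contradiction with $\epsilon(T(G)/n^3) \ge \delta/2$; that contradiction would require $\epsilon(\delta/\epsilon(\delta)) < \delta/2$, yet you simultaneously invoke $\epsilon(\delta/\epsilon(\delta)) \ge \delta$. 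These cannot both hold. In fact the claimed lower bound on $T(G)$ is \emph{false} in the extremal example: for the diamond-free hypergraph on $n$ vertices (each pair in at most one triple) one has $T(G) = d(n)n^2/3$, $\delta = d(n)/(3n)$, and $\epsilon(\delta/\epsilon(\delta)) \le \epsilon(1/n) \to 0$, so $T(G)/n^3 = d(n)/(3n)$ is far smaller than $\delta/(6\epsilon(\delta/\epsilon(\delta)))$ for large $n$.

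The paper does \emph{not} try to lower-bound $T(G)$. It splits into two cases. If $T(G) \ge \delta n^3/\epsilon(\delta)$, your triforce-per-triangle bound already gives enough. Otherwise $T(G) < \delta n^3/\epsilon(\delta)$, and the removal set $S$ has $|S| \le \epsilon(\delta/\epsilon(\delta))\,n^2$. The key additional idea, which your proposal is missing, is a pigeonhole on $S$: since the $\ge \delta n^3/2$ good triples each contain an edge of $S$, an average edge of $S$ lies in at least $\frac{\delta n^3}{4|S|}$ triples of $H'$. Call such edges \emph{strong}; at least $\delta n^3/4$ good triples contain a strong edge. Each such triple $\{u,v,w\}$ with $uv$ strong extends to at least $(\delta n)^2 \cdot \frac{\delta n^3}{4|S|}$ triforces (two factors of $\delta n$ from the other two pairs, and $\frac{\delta n^3}{4|S|}$ from the strong pair), and summing gives the bound $\delta^4 n^6 /(3\epsilon(\delta/\epsilon(\delta)))$. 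In short, when $T(G)$ is small you must exploit that the triples are \emph{concentrated} on few edges of $S$, not that there are many triangles.
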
 
\begin{proof}
Let $H$ be a $3$-uniform hypergraph with at least $\delta n^3$ triples (i.e., triple density at least $6\delta$). If $H$ has a pair of vertices in at most $\delta n$ triples, then delete all triples of $H$ containing that pair of vertices, and repeat. In total, we delete at most $\binom{n}{2} \delta n$ triples, so the remaining subhypergraph $H'$ has at least $(\delta/2) n^3$ triples, and each pair of vertices lying in some triple of $H'$ actually lies in at least $\delta n$ triples of $H'$. 

Let $G$ be the graph with the same vertex set as $H$, where $uv \in E(G)$ if $u$ and $v$ both lie in some triple of $H'$ (and hence in at least $\delta n$ triples of $H'$). We can extend each triangle in $G$ to at least $6(\delta n)^3$ triforces (as homomorphisms). Hence, if $G$ has at least $\delta n^3/\epsilon(\delta)$ triangles, then $H'$ (and hence $H$) has at least $6(\delta n)^3 \cdot \delta n^3/\epsilon(\delta)=6 \delta^4 n^6 / \epsilon(\delta)$ triforces, and we would be done. 

We have reduced the problem to the case where $G$ has fewer than $\delta n^3 / \epsilon(\delta)$ triangles. By the triangle removal lemma, there is an edge subset $S \subseteq E(G)$ such that $|S| \leq \epsilon(\delta/\epsilon(\delta))n^2$ and every triangle of $G$ contains at least one edge in $S$. Note that every triple of $H'$ forms a triangle in $G$, so every triple of $H'$ contains at least one edge in $S$. Call an edge $uv \in S$ \emph{strong} if $u$ and $v$ are contained in at least $\frac{\delta}{4|S|}n^3$ triples of $H'$. The number of triples of $H'$ that contain a strong edge is at least $e(H')-|S| \cdot \frac{\delta}{4|S|}n^3 \geq \frac{\delta}{4}n^3$. Every triple of $H'$ that contains a strong edge can be extended  to at least $6(\delta n)^2 \cdot \frac{\delta}{4|S|}n^3 \geq \frac{3\delta^3}{2\epsilon(\delta/\epsilon(\delta))}n^3$ triforces. Thus, the number of triforces is at least $$\frac{\delta}{4}n^3 \cdot \frac{3\delta^3}{2\epsilon(\delta/\epsilon(\delta))}n^3 > \frac{\delta^4}{3\epsilon(\delta/\epsilon(\delta))}n^6.$$ 
\end{proof}

As $\epsilon(\delta)=\delta^{o(1)}$ (this follows from the Behrend bound as described above), and $\epsilon(\delta) \to 0$ as $\delta \to 0$, \cref{lem:g-upperbd} implies that $g(\delta)=\omega(\delta^4)$.

\cref{lem:g-upperbd,lem:g-lowerbd} complete the proof of \cref{thm:triforce2}.

\medskip

The proof of \cref{thm:kforce} is almost verbatim the same as that of \cref{thm:triforce2}. In modifying the above proofs, $H$ becomes a $k$-uniform hypergraph and $G$ becomes a $(k-1)$-uniform hypergraph. Instead of the triangle removal lemma, we would apply the simplex removal lemma for hypergraphs. We omit the details.

\section{Constructions avoiding popular differences}\label{sec:constructions}

In this section we prove \cref{thm:3d-corner,thm:5-point}.

\subsection{Motivation of proof}

Before diving into the proof it is worth considering the relationship between this result and the counterexample given by Bergelson, Host, Kra, and Ruzsa \cite{BHKR} for  popular differences for $5$-AP's. The counterexample for 5-AP popular difference in \cite{BHKR} essentially\footnote{Note that in \cite{BHKR} the counterexample is only stated for infinitely many $N$. We require additional considerations in order for the result to apply for all sufficiently large $N$ and these can be adapted to the setting in \cite{BHKR}. Conversely the number theoretic considerations in our proof are simplified when giving a counterexample for only infinitely many $N$.} implies the analogous version of \cref{thm:3d-corner} for 4-dimensional corners by pulling back the 5-AP counterexample via the map $(x,y,z,w)\to x+2y+3z+4w$. 

This pull-back construction and the Green-Tao \cite{GrTa10} result on popular differences for $4$-APs may suggest that the popular difference result for $2$-dimensional corners extends to $3$-dimensional corners. However, the Green-Tao \cite{GrTa10} proof (based on \cite{BHKR}) ultimately boils down to the identity $P(0) - 3 P(1) + 3 P(2) - P(3) = 0$ for quadratic polynomials $P$. This identity  yields a critical positivity as it can be rewritten as $P(0) - 3P(1) = P(3)- 3 P(2)$ and there is an inherent symmetry between the left and right hand sides. However, a $3$-dimensional corner can project not only to a $4$-AP but other $4$-term progressions, e.g., $x, x+y, x+2y, x+4y$ and such patterns do not posses the same ``magical'' positivity.

Ruzsa's construction relies on the property that if $P$ is a nonconstant univariate quadratic polynomial taking $\RR^n$ values, then $P(0), P(1), P(2), P(3), P(4)$ cannot all lie on the unit sphere. This can be proved using the identities $P(0) - 3 P(1) + 3 P(2) - P(3) = 0$ and $P(1) - 3 P(2) + 3P(3) - P(4) = 0$, and observing that no 5 points on a sphere can satisfy these linear relations.

If there were an identity expressing $P(3)$, say, as a convex combination of $P(0)$, $P(2)$, and $P(6)$, over all quadratic polynomials $P$, then one can mimic the proof given in \cite{BHKR} to construct a subset of $[N]^3$ without popular 3-dimensional corners. However, this approach fails as no such identity can exist, since if $a_1 P(b_1) + \cdots + a_4 P(b_4) = 0$ for all quadratic polynomials $P$, where $a_i$, $b_i$'s are constants with $a_1+a_2+a_3+a_4=0$, all $b_i$'s distinct, and $a_i$'s not all zero, then by Lagrange interpolation one can deduce that there must be two positive and two negative values among the $a_i$'s. However, we do not know if there are subsets of $[N]$, in the style of Behrend's construction, with $N^{1-o(1)}$ elements avoiding such patterns (e.g., $2x + 2y=3w + z$).

However, the above discussion is limited to univariate polynomials. We can circumvent this difficulty by constructing a quadratic polynomial $f(x,y,z)$ satisfying $f(x+d,y,z) + f(x,y+d,z) + f(x,y,z+d) = 3 f(x,y,z)$ and then carry out the strategy in \cite{BHKR}. 

\subsection{Proof of \cref{thm:3d-corner}}
Define 
\begin{equation}\label{eq:f}
f(x, y, z) =  (x - y) (x + y - 2 z),
\end{equation}
which satisfies the following useful identity:
\begin{equation}\label{eq:f-id}
    f(x+d,y,z)+f(x,y+d,z)+f(x,y,z+d)=3f(x,y,z).
\end{equation}

\begin{lemma}\label{lem:behrend-modified}
There is an absolute constant $c > 0$ such that the following holds. For every integer $L > 0$ there exists a subset $\Lambda$ of $\{0, 1, \ldots, L - 1\}$ having at least $L\exp(-c\sqrt{\log L})$ elements that does not contain any nontrivial solutions to $x+y+z=3w$ (here a trivial solution is one with $x=y=z=w$).
\end{lemma}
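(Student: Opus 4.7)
The plan is to run Behrend's classical sphere-in-a-box construction, adapted to the relation $x+y+z=3w$ in place of the usual 3-AP relation $x+z=2y$. The key geometric fact is a strict convexity claim: if $a,b,c,e\in\RR^d$ all have the same Euclidean norm $R$ and satisfy $a+b+c=3e$, then $a=b=c=e$. To see this, expand $|a+b+c|^2=|3e|^2=9R^2$, use $|a|=|b|=|c|=R$ to obtain $a\cdot b+b\cdot c+a\cdot c=3R^2$, and then apply Cauchy--Schwarz (each inner product is at most $R^2$) to conclude that equality holds in each term, forcing $a=b=c$, hence $e=a$.

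Next I would encode $[M]^d=\{0,1,\ldots,M-1\}^d$ into integers via a base-$B$ map $\phi(a_1,\ldots,a_d)=\sum_i a_iB^{i-1}$ with $B=3M$. The choice $B=3M$ is just large enough that $a_i+b_i+c_i<B$ and $3e_i<B$ for all coordinates, so $\phi$ is an injection with the noncarrying property $\phi(a)+\phi(b)+\phi(c)=3\phi(e)$ in $\ZZ$ if and only if $a_i+b_i+c_i=3e_i$ for every $i$. By pigeonhole on the squared length $|v|^2\in\{0,1,\ldots,dM^2\}$, some sphere of radius $\sqrt{R}$ contains a set $S\subseteq[M]^d$ of size at least $M^d/(dM^2+1)$. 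Setting $\Lambda=\phi(S)\subseteq\{0,1,\ldots,B^d-1\}$, the noncarrying property combined with the rigidity claim above shows that any solution in $\Lambda$ to $x+y+z=3w$ must be trivial.

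Finally, I optimize the parameters. With $L=(3M)^d$ one gets $|\Lambda|\gtrsim M^{d-2}/d=L\cdot 3^{-d}L^{-2/d}/(9d)$, and choosing $d\sim\sqrt{\log L}$ (so $M\sim L^{1/d}/3$) balances the two loss terms $d\log 3$ and $(2/d)\log L$ to yield $|\Lambda|\ge L\exp(-c\sqrt{\log L})$ for an absolute constant $c>0$. For general $L$ not of the form $(3M)^d$, pick $M,d$ with $(3M)^d\le L<(3(M+1))^d$ and use $\Lambda$ as constructed inside $[0,L)$; this only costs a constant factor, which is absorbed into $c$. I do not expect any real obstacle here: the argument is a direct transcription of Behrend, and the only content specific to the relation $x+y+z=3w$ is the strict-convexity/Cauchy--Schwarz step, which is a straightforward substitute for the parallelogram identity used for 3-APs.
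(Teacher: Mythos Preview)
Your argument is correct and is precisely the standard modification of Behrend's construction that the paper invokes (the paper does not give details, merely citing Behrend and \cite{Alon}). The only blemish is a harmless arithmetic slip in the optimization step: with $L=(3M)^d$ one has $M^{d-2}/d = 9\,L\,3^{-d}L^{-2/d}/d$, not $L\,3^{-d}L^{-2/d}/(9d)$, but this constant factor is absorbed into $c$ anyway.
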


\begin{proof}
This follows from a standard modification from  Behrend's construction~\cite{B} of a large 3-AP-free set (e.g., see \cite[Lemma 3.1]{Alon}).
\end{proof}

For the equation $x+y+z = 3w$ this construction is known be essentially the best possible due to the work of Schoen and Sisask \cite{SS}. The next lemma is similar to Lemma 2.3 in \cite{BHKR}.

\begin{lemma}\label{lem:alpha-transfer}
Let $\Lambda$ be a subset of $\{0, 1, \ldots, L - 1\}$ not containing any nontrivial solutions to $a+b+c=3d$ and $\alpha$ be a fixed real constant. For each $j\in\Lambda$, let
\[I_j := \left[\frac{j}{3L}, \frac{j}{3L} + \frac{1}{9L}\right) \subseteq \TT := \RR/\ZZ,
\]
and let
\[
B = \bigcup_{j \in \Lambda} I_j.
\]
Let $f$ be the polynomial in \eqref{eq:f}. Let $w = \alpha f(n_1,n_2,n_3)$, $x = \alpha f(n_1 + d,n_2,n_3)$, $y = \alpha f(n_1,n_2 + d,n_3)$, and $z = \alpha f(n_1,n_2,n_3+d)$ and suppose that $w, x, y, z \pmod 1$ all lie in $B$. Then 
\[
\norm{2\alpha (n_1-n_2)d}_{\RR/\ZZ} < \frac{1}{9L},
\]
where $\norm{x}_{\RR/\ZZ}$ denotes the distance from $x \in \RR$ to the closest integer.
\end{lemma}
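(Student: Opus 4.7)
The plan is to exploit the polynomial identity \eqref{eq:f-id} to convert the hypothesis into the real equation $x+y+z=3w$, then push this through the digit structure of $B$ (dictated by $\Lambda$) to pin down the fractional parts of $w,x,y,z$ to a single interval $I_j$, and finally compute $z-w$ explicitly.

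First, by \eqref{eq:f-id}, $x+y+z=3w$ as real numbers. Since each of $w,x,y,z \pmod 1$ lies in $B$, there exist $j_w,j_x,j_y,j_z \in \Lambda$ and $\epsilon_w,\epsilon_x,\epsilon_y,\epsilon_z \in [0,1/(9L))$ with $w \equiv j_w/(3L)+\epsilon_w$, $x \equiv j_x/(3L)+\epsilon_x$, and so on, all mod $1$. Subtracting gives
\[
\frac{j_x+j_y+j_z-3j_w}{3L} + (\epsilon_x+\epsilon_y+\epsilon_z-3\epsilon_w) \equiv 0 \pmod 1.
\]

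Second, the second summand lies in $(-1/(3L),1/(3L))$ since each $\epsilon$ is in $[0,1/(9L))$. Multiplying the congruence by $3L$, the integer $j_x+j_y+j_z-3j_w$ must lie within distance $1$ of a multiple of $3L$; but each $j_\bullet \in \{0,\ldots,L-1\}$ forces $|j_x+j_y+j_z-3j_w|\le 3(L-1)<3L$, so the only option is $j_x+j_y+j_z=3j_w$. The assumption on $\Lambda$ (no nontrivial solutions to $a+b+c=3d$) then yields $j_x=j_y=j_z=j_w=:j$. In particular $w \pmod 1$ and $z \pmod 1$ both lie in the single interval $I_j$ of length $1/(9L)$, so $\|z-w\|_{\RR/\ZZ}<1/(9L)$.

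Third, compute $z-w$ directly from $f(x,y,z)=(x-y)(x+y-2z)$. Since $f$ depends linearly on its third argument through the factor $-2z$, we get
\[
f(n_1,n_2,n_3+d)-f(n_1,n_2,n_3) = (n_1-n_2)\cdot(-2d),
\]
hence $z-w = -2\alpha d(n_1-n_2)$. Combining with the previous bound gives $\|2\alpha(n_1-n_2)d\|_{\RR/\ZZ}<1/(9L)$, as desired.

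The only genuinely non-routine step is the second one: turning the additive identity for $w,x,y,z$ into the integer equation $j_x+j_y+j_z=3j_w$ so as to invoke the $\Lambda$-hypothesis. The rest is bookkeeping and a one-line polynomial computation (made clean by the fact that the $z$-direction difference $z-w$ is the simplest of the three, avoiding any $d^2$ error terms).
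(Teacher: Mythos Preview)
Your proof is correct and follows essentially the same approach as the paper's: use the identity $x+y+z=3w$, show via the interval/digit structure of $B$ that the four indices in $\Lambda$ must satisfy $j_x+j_y+j_z=3j_w$ and hence coincide, then read off the bound from $z-w=-2\alpha d(n_1-n_2)$. Your handling of the mod~$1$ reduction via explicit $\epsilon$'s is in fact a bit more carefully written than the paper's interval-intersection phrasing.
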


\begin{proof}
By \eqref{eq:f-id},
we have $x+y+z=3w$. Let $W, X, Y, Z\in\Lambda$ be such that $w\in I_{W}$, $x\in I_{X}$, $y\in I_{Y}$, and $z\in I_{Z}$. Then
$
x+y+z \pmod{1}$ lies in $[\frac{X+Y+Z}{3L}, \frac{X+Y+Z}{3L} + \frac{1}{3L})$ and $3w \pmod{1}$ lies in $[\frac{3W}{3L}, \frac{3W}{3L} + \frac{1}{3L})$.
Since $X+Y+Z < 3L$, these two intervals intersects exactly when $W+X+Y = 3Z$, which implies that $W=X=Y=Z$ since $\Lambda$ has no nontrivial solutions to this equation. The conclusion follows from the identity $2\alpha d(n_1-n_2) = w - z$ and that $w$ and $z$ both lie in the interval $I_W$ with length $1/(9L)$.
\end{proof}

Finally, as in \cite{BHKR}, we need irrational numbers well-approximable by fractions with a special property (\cite{BHKR} only contains a sketch of this part of the argument). In \cite{BHKR} the result for 5-APs is proved only for infinitely many values of $N$. In order to make the construction work for all sufficiently large $N$, we need to construct a set of such irrational numbers rather than the single number $\alpha$ used in \cite{BHKR}, which explains some of the technicalities to follow. Later, we explain how to extend the 5-AP construction for infinitely many $N$ in \cite{BHKR} to all sufficiently large $N$ for any fixed five point pattern.

Before proceeding with the proof of the next lemma, we require several elementary facts regarding continued fractions. Here we use the standard notation that 
\[(c_0; c_1, c_2, \ldots) := c_0+\frac{1}{c_1+\frac{1}{c_2+\ldots}}
\quad\text{and}\quad
(c_0; c_1, c_2, \ldots,c_k) := c_0+\frac{1}{c_1+\frac{1}{c_2+\ldots\frac{1}{c_k}}}.
\]
Let the \emph{$k$-th approximant} of the continued fraction be
\[
(c_0; c_1, c_2, \ldots,c_{k}) = P_k/Q_k,
\]
where $P_k$ and $Q_k$ are relatively prime positive integers. By standard facts about continued fractions, we have the double recurrence
\begin{equation}\label{eq:continued-fraction-1}
P_k = c_k P_{k-1}+P_{k-2} \quad \text{and} \quad Q_k = c_k Q_{k-1} + Q_{k-2}
\end{equation}
for $k\ge 0$ with $Q_{-1} = 0$ and $Q_{-2} = 1$. Finally if $\alpha = (c_0; c_1, c_2, \ldots)$ and $n\ge 1$, then
\begin{equation}\label{eq:continued-fraction-2}
\left|\alpha - \frac{P_n}{Q_n}\right| < \frac{1}{Q_nQ_{n + 1}}.
\end{equation}
\begin{lemma}\label{lem:alpha-hard}
Fix a positive integer $m > 1$. Then there is a real $b \in (1, 2^{2m + 1}]$ such that the following holds. For all real $r > 0$, there is an irrational number $\alpha$ and infinitely many fractions $p_i/q_i$ with relatively prime positive integers $p_i < q_i$ and $q_i$ having no prime factor smaller than $m$ such that $|\alpha - p_i/q_i| < 1/(mq_i^2)$, and $rb^i < q_i < 2rb^i$ for $i\ge i(r, m, b)$ sufficiently large.
\end{lemma}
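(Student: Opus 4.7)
Plan: Construct $\alpha$ via its simple continued fraction $\alpha = [0; c_1, c_2, \ldots]$ and take the convergents $P_k/Q_k$ as the desired $p_i/q_i$. The three required properties translate to: (i) $Q_k$ coprime to $M := \prod_{p < m} p$; (ii) $c_{k+1} \geq m$, which via \eqref{eq:continued-fraction-1} gives $Q_{k+1} \geq m Q_k$ and hence $|\alpha - P_k/Q_k| < 1/(Q_k Q_{k+1}) \leq 1/(m Q_k^2)$ by \eqref{eq:continued-fraction-2}; and (iii) $Q_k \in (r b^k, 2 r b^k)$ for all large $k$. The standard continued-fraction facts then give $\gcd(P_k, Q_k) = 1$ and $0 < P_k < Q_k$ for $k \geq 1$, supplying the required positive coprime pairs.

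By Chebyshev's bound $\prod_{p \leq x} p \leq 4^x$, we have $M \leq 4^{m-1}$, so any $b \in (m, 2^{2m+1}]$ with $b \geq 5M$ is admissible (such a $b$ exists, since $5M \leq (5/4)\cdot 2^{2m} \leq 2^{2m+1}$); fix such a $b$ depending only on $m$. The key inductive step: given $Q_{k-1}$ coprime to $M$, the equation $Q_k = c_k Q_{k-1} + Q_{k-2}$ excludes exactly one residue class of $c_k$ modulo each prime $p \mid M$, so by CRT any block of $M$ consecutive integers contains exactly $\phi(M) \geq 1$ valid $c_k$'s. Taking the length-$2M$ window $[b - M, b + M)$ then yields $2\phi(M) \geq 2$ valid choices, with $\phi(M)$ strictly below $b$ and $\phi(M)$ at or above $b$, all satisfying $c_k \geq b - M \geq m$, thereby securing (i) and (ii).

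The heart of the argument is the adaptive tracking for (iii). Defining $u_k := \log(Q_k/(bQ_{k-1}))$, the valid $c_k$'s in the window yield $u_k$ negative for $c_k < b$ (since then $c_k + Q_{k-2}/Q_{k-1} < b$, using $Q_{k-2} < Q_{k-1}$) and positive for $c_k \geq b$, with $|u_k| \leq \log(1 + (M+1)/b) \leq \tfrac{1}{2}\log 2$ when $b \geq 5M$. Choosing $c_k$ greedily to drive the partial sum $S_k := \sum_{j \leq k} u_j = \log(Q_k/b^k) - \log Q_0$ toward the preset target $\log r + \tfrac{1}{2}\log 2$ keeps $|S_k - \log r - \tfrac{1}{2}\log 2| \leq \tfrac{1}{2}\log 2$, hence $Q_k/b^k \in [r, 2r]$. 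A finite initial segment $c_1, \ldots, c_{i(r,m,b)}$ is used to place $S_{i(r,m,b)}$ in the target range, after which the adaptive scheme maintains it. The main obstacle is the bookkeeping for this adaptive step---verifying that the discrete choices of $u_k$ (with $\phi(M)$ options on each side of $0$) are fine enough to keep $S_k$ in the narrow target interval of length $\log 2$---which reduces to the bound $|u_k| \leq \tfrac{1}{2}\log 2$ assured by taking $b$ a sufficiently large multiple of $M$.
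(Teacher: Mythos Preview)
Your approach is correct and genuinely different from the paper's. The paper fixes $a = \operatorname{lcm}(1,\ldots,m)$ and the algebraic number $b = (a+\sqrt{a^2+4})/2$ satisfying $b^2 = ab+1$, then uses Bertrand's postulate to find primes $x \in (rb^K, 2rb^K)$ and $y \in (rb^{K+1}, 2rb^{K+1})$, seeds the continued fraction via the Euclidean algorithm so that $Q_t = x$, $Q_{t+1} = y$, and thereafter sets every partial quotient equal to $a$. The recurrence $Q_k = aQ_{k-1} + Q_{k-2}$ then propagates the interval condition $Q_k \in (rb^{k-t+K}, 2rb^{k-t+K})$ exactly (since $b$ is the dominant root), and coprimality to $a$ is automatic from $Q_k \equiv Q_{k-2} \pmod a$. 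Your adaptive scheme trades this algebraic rigidity for flexibility: you dispense with Bertrand and with the special $b$, instead maintaining coprimality by CRT and the growth rate by greedy tracking. The paper's argument is shorter once the Bertrand/Euclid setup is granted; yours is more self-contained and makes clearer that no number-theoretic input beyond CRT is needed.

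Two small points to tighten. First, you should take $b$ to be an \emph{integer} (e.g.\ $b = 5M$, which lies in $(m,2^{2m+1}]$ as you note); otherwise the claim ``$c_k < b$ implies $u_k < 0$'' can fail, since for non-integer $b$ one may have $c_k = \lfloor b\rfloor$ with $c_k + Q_{k-2}/Q_{k-1} > b$. With $b$ an integer, $c_k \le b-1$ and $Q_{k-2}/Q_{k-1} < 1$ give the claim cleanly. Second, the initial-segment sentence deserves one more line: for $k \ge 2$ both the available positive and negative increments $u_k$ are bounded away from zero by a constant depending only on $m$ (indeed $Q_{k-2}/Q_{k-1} \in (1/(b+M), 1/(b-M))$ forces $|u_k| \gtrsim 1/b^2$), so the target window is reached in $O_m(|\log r|)$ steps while coprimality is maintained inductively from $Q_0 = 1$.
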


\begin{proof}
In order to construct the desired irrational $\alpha$ we build its continued fraction expansion $(c_0;c_1,c_2, \ldots )$ iteratively. Let $a = \text{lcm}(1, 2, \ldots, m)$ and set $b = \frac{a + \sqrt{a^2 + 4}}{2}$. Then  $a<4^m$ (see \cite{Nair}) and thus $2\le b\le 2^{2m + 1}$. Choose $K$ such that $rb^K > 2m$. It follows from Bertrand's postulate that there exist primes $x, y$ such that $x\in (rb^N, 2rb^N)$ and $y\in (rb^{N+1},2rb^{N+1})$. Since $2m < x < y$ and $x$ and $y$ are primes, we have $\gcd(xy, a) = 1$ and $\gcd(x, y) = 1$.

Now we check that there exist sequences $\{c_i\}_{i\ge 0}, \{P_i\}_{i\ge 0}, \{Q_i\}_{i\ge 0}$ of positive integers with the following properties:
\begin{itemize}
\item $P_k/Q_k=(c_0;c_1,c_2,\ldots, c_k)$ for each $k\ge 1$, where $P_k$ and $Q_k$ are relatively prime, and
\item there exists a positive integer $t$ with $Q_t=x, Q_{t+1}=y$.
\end{itemize}
Indeed, this follows from running the Euclidean algorithm on $x,y$ and using the recurrence relation \eqref{eq:continued-fraction-1}. This establishes the value of $c_i$ for all $i\le t+1$. Set $c_i = a$ for all $i\ge t + 2$, and set $\alpha=(c_0;c_1,c_2,\ldots)$ Now since $Q_t = x\in (rb^K,2rb^K)$ and $Q_{t + 1} = y\in (rb^{K+1},2rb^{K+1}),$ it inductively follows that $Q_n\in (rb^{n-t+K},2rb^{n-t+K})$, since $Q_n = aQ_{n - 1} + Q_{n - 2}$ for $i\ge t + 2$ and $b$ satisfies $b^2 = ab + 1$.
Furthermore, by construction, we have $Q_n\equiv Q_{n - 2}\pmod{a}$ for $n\ge t + 2$. Thus either $Q_n\equiv x\pmod{a}$ or $Q_n=y\pmod{a}$ for all $n\ge t$, each of which is relatively prime to $a$. Thus $Q_n$ for $n\ge t$ is relatively prime to $a$, and since $a = \text{lcm}(1, 2, \ldots, m)$, we see that $Q_n$ for $n\ge t$ has no prime factors of size at most $m$. We can now let $(p_i, q_i) = (P_{i + t - K}, Q_{i + t - K})$ for sufficiently large $i$, and by construction these integers satisfy the required conditions. Finally, by \eqref{eq:continued-fraction-2} we have
\[\left|\alpha - \frac{P_n}{Q_n}\right| < \frac{1}{Q_nQ_{n + 1}} < \frac{1}{aQ_n^2}\le\frac{1}{mQ_n^2}.\]
\end{proof}
We are ready to prove \cref{thm:3d-corner}.
\begin{proof}[Proof of \cref{thm:3d-corner}]
We may assume that $\delta$ is sufficiently small or otherwise we can take $A = [N/2]\times [N]\times [N]$ and then the theorem is true if the constant is chosen appropriately.

Let $L = \exp(c \log(1/\delta)^2)$ for an appropriately chosen sufficiently small constant $c > 0$. Apply \cref{lem:alpha-hard} for $m = L$ and $t = 2L + 1$ different values of $r$, namely $r = 2^j$ for $1\le j\le 2L + 1$. The lemma gives a single $b \in (1, 2^{2L + 1}]$ and irrationals $\alpha_1, \ldots, \alpha_t$ as well as positive integers $p_{j, i}, q_{j, i}$ with $\gcd(p_{j, i}, q_{j, i}) = 1$ so that for all $j \in [t]$,
\begin{itemize}
    \item $q_{j, i}\in (2^jb^i, 2^{j + 1}b^i)$ for sufficiently large $i\ge i(j)$, and
    \item $\gcd(q_{j, i}, \text{lcm}(1, \ldots, L)) = 1$ for $i\ge i(j)$, and
    \item $|\alpha_j - \frac{p_{j, i}}{q_{j, i}}| < \frac{1}{Lq_{j, i}^2}$ for $i\ge i(j)$.
\end{itemize}
Let $I = \max\{i(1), \ldots, i(t)\}$. Then the above properties hold for all $1\le j\le t$ and $i\ge I$. Observe that all sufficiently large $N$ (here ``sufficiently large'' depends on $\delta$) are within a factor of $4$ from some $q_{j, i}$ with $1\le j\le t$ and $i\ge I$. Therefore, to prove the theorem for all sufficiently large integers $N$, it suffices to prove it for numbers of the form $N = q_{j, i}$.

Let $N = q_{j, i}$ with $1\le j\le t$ and $i\ge I$. Let $\alpha = \alpha_j$. Define
\[F = \{n\in\mathbb{N}: n\alpha\in B\pmod{1}\},\]
where, as in \cref{lem:alpha-transfer},
\[
B = \bigcup_{k \in \Lambda} \left[\frac{k}{3L}, \frac{k}{3L} + \frac{1}{9L}\right) \subseteq \TT 
\]
and $\Lambda$ is a subset of $\{0, 1,\dots, L-1\}$ of size $Le^{- O(\sqrt{\log L})}$ not containing nontrivial solutions to $a+b+c=3d$ (by \cref{lem:behrend-modified}). Let $f(x, y, z) =  (x - y) (x + y - 2 z)
$ as in \eqref{eq:f}, and
\begin{equation}
A = \{(x_1, x_2, x_3)\in [N]^3: f(x_1, x_2, x_3)  \in F\}.
	\label{eq:construction-A}
\end{equation}
By the Weyl equidistribution criterion (e.g., see \cite{TaoAnalysis}), using $m(\cdot)$ for Lebesgue measure, as $N \to \infty$,
\[
\frac{|A|}{N^3} \to m(B) = \frac{|\Lambda|}{9L} = e^{-O(\sqrt{\log L})} \ge 2\delta 
\] 
as long as we have chosen the constant $c$ in $L = \exp(c \log(1/\delta)^2)$ so that the last inequality is true. Thus, for sufficiently large $N$, we have $|A|\ge\delta N^3$.

A key point here is that while the rate of convergence of the equidistribution claim may depend on $\alpha$, since there are only finitely many $\alpha$'s that we need to consider, there is a single $N_0(\delta)$ such that $|A| \ge \delta N^3$ whenever $N = q_{j,i} \ge N_0(\delta)$ with $j \in [t]$ and $i \ge I$ as above.

Fix a nonzero integer $s$ with $|s| < N$, which will be the common difference of the corners that we are counting. Suppose $(a_1, a_2, a_3)\in A$ generates a corner of common difference $s$ within $A$, i.e. 
\[
(a_1, a_2, a_3), (a_1 + s, a_2, a_3), (a_1, a_2 + s, a_3), (a_1, a_2, a_3 + s) \in A.
\]
Then 
\[
f(a_1, a_2, a_3), f(a_1 + s, a_2, a_3), f(a_1, a_2 + s, a_3), f(a_1, a_2, a_3 + s) \in F
\]
by the construction~\eqref{eq:construction-A}. By \cref{lem:alpha-transfer}, $\norm{2s(a_1 - a_2)\alpha}_{\RR/\ZZ} < 1/(9L)$. So
\begin{align*}
\norm{2s(a_2 - a_3)\frac{p_{j, i}}{q_{j, i}}}_{\RR/\ZZ}
&\le \norm{2s(a_1 - a_2)\alpha}_{\RR/\ZZ} + 
\abs{2s(a_1 - a_2)\alpha - 2s(a_1 - a_2)\frac{p_{j, i}}{q_{j, i}}} 
\\
&\le \frac{1}{9L} + 2s\abs{a_1 - a_2}\abs{\alpha - \frac{p_{j, i}}{q_{j, i}}} 
\\
&\le \frac{1}{9L} + 2N^2 \cdot \frac{1}{Lq_{j, i}^2} = \frac{1}{9L} + \frac{2}{L} \le \frac{3}{L}
\end{align*}

Recall that $N = q_{j,i}$ is relatively prime to all of $[L]$ as well as to $p_{j,i}$. In particular, $N$ is odd. Also $|s| < N$, so $s$ is not divisible by $N$. It follows that $2sp_{j, i}/q_{j, i}$ is not an integer. Writing $2sp_{j, i}/q_{j, i} = P/Q$ where $P$ and $Q$ are relatively prime integers with $Q$ positive, one has $Q > L$ since all prime divisors of $q_{j,i}$ are greater than $L$.

Thus $\norm{(a_2-a_3) P/Q}_{\RR/\ZZ} \le 3/L$. So $(a_2-a_3)P \pmod Q \in [-\floor{3Q/L},\floor{3Q/L}]$. Since multiplication by $P$ is a bijection in $\ZZ/Q\ZZ$, there are at most $1 + 6Q/L \le 7Q/L$ possible values that $a_2 -a_3$ can take in $\ZZ/Q\ZZ$, and hence there are $7 N/L$ possible values (recall $N/Q \in \ZZ$) that $a_2 - a_3$ can take in $[0,N)$. This gives at most $14 N/L$ possible values $a_2 - a_3$ can take in $(-N, N)$. Therefore there are at most $14 N^3/L \le  14e^{- c\log(1/\delta)^2}N^3$ different points $(a_1, a_2, a_3) \in [N]^3$ that generates a corner of common difference $s$.
\end{proof}

\subsection{Proof of \cref{thm:5-point}} The construction for 5-point patterns is a modification of Ruzsa's construction for 5-APs in \cite{BHKR}.

\begin{definition}
	Let $\bm a = (a_1, \dots, a_k) \in \ZZ^k$ be a vector of distinct integer coordinates.
	A \emph{quadratic configuration of type $\bm a$}, abbreviated $QC(\bm a)$, is a vector of the form $(P(a_1), \dots, P(a_k)) \in \ZZ^k$ where $P$ is some non-constant polynomial of degree at most 2. 
	
	We say that some set of integers $S$ contains a $QC(\bm a)$ if $P(a_1), \dots, P(a_k) \in S$ for some nonconstant polynomial $P$ of degree at most $2$.
\end{definition}

\begin{remark}
For 5-APs, \cite{BHKR} only considers $\bm a = (0,1,2,3,4)$, in which case $QC(\bm a)$ was called a \emph{QC5}.
\end{remark}

\begin{lemma} \label{lem:qc}
	Let $k \ge 4$. Given $\bm a = (a_1, \dots, a_k) \in \ZZ^k$ with distinct coordinates, there exists a  vector $(\gamma_{i,0}, \dots, \gamma_{i,3}) \in (\ZZ\setminus\{0\})^4$ for each $1 \le i \le k-3$ such that $\bm y = (y_1, \dots, y_k) \in \ZZ^k$ is a $QC(\bm a)$ if and only if not all entries of $\bm y$ are equal and 
	\begin{equation}
		\label{eq:qc-linear}
		\gamma_{i,0} y_i +  \gamma_{i,1} y_{i+1} + \gamma_{i,2} y_{i+2} + \gamma_{i,3} y_{i+3}  = 0 \quad  \text{ for each } 1 \le i \le k-3.
	\end{equation}
\end{lemma}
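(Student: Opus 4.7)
\medskip
\noindent\textbf{Proof plan for \cref{lem:qc}.} My plan is to produce the coefficients $\gamma_{i,j}$ explicitly via the third-order divided difference of $P$ at $a_i,a_{i+1},a_{i+2},a_{i+3}$, then use this concrete form in a short induction to prove the reverse direction.

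\medskip
\noindent\emph{Forward direction.} Suppose $y_j = P(a_j)$ for some non-constant $P$ of degree at most $2$. For any four distinct nodes $a_i,a_{i+1},a_{i+2},a_{i+3}$, the third divided difference annihilates polynomials of degree $\leq 2$, giving
\[
\sum_{j=0}^{3} \frac{P(a_{i+j})}{\prod_{0\leq l\leq 3,\, l\neq j}(a_{i+j}-a_{i+l})}=0.
\]
Clearing the common denominator $\prod_{0\le p<q\le 3}(a_{i+q}-a_{i+p})$ produces an integer relation
\[
\gamma_{i,0}y_i+\gamma_{i,1}y_{i+1}+\gamma_{i,2}y_{i+2}+\gamma_{i,3}y_{i+3}=0,
\]
in which each $\gamma_{i,j}$ equals (up to sign) $\prod_{0\le p<q\le 3,\,\{p,q\}\neq\{j,\cdot\}}(a_{i+q}-a_{i+p})$ — a product of differences of distinct integers, hence nonzero. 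This is exactly \eqref{eq:qc-linear}. Finally $\bm y$ is not constant because $P$ is non-constant and the $a_j$'s are distinct (so $P$ cannot take the same value at all of them — true as long as $k\geq 3$ for a degree-$\leq 2$ polynomial).

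\medskip
\noindent\emph{Reverse direction.} Assume $\bm y$ satisfies the relations \eqref{eq:qc-linear} and is not constant. Let $P$ be the unique polynomial of degree at most $2$ with $P(a_1)=y_1,P(a_2)=y_2,P(a_3)=y_3$, obtained from Lagrange interpolation. I claim $P(a_j)=y_j$ for all $1\leq j\leq k$, by induction on $j$. The base cases $j\leq 3$ hold by construction. For the inductive step, assume $P(a_l)=y_l$ for all $l\leq j+2$; then by the forward direction applied to $P$ itself on the nodes $a_j,a_{j+1},a_{j+2},a_{j+3}$, the tuple $(P(a_j),\dots,P(a_{j+3}))$ satisfies the same relation with coefficients $(\gamma_{j,0},\dots,\gamma_{j,3})$ as $(y_j,\dots,y_{j+3})$ does. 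Subtracting and using $\gamma_{j,3}\neq 0$ yields $P(a_{j+3})=y_{j+3}$, closing the induction. Since $\bm y$ is not constant, $P$ cannot be constant, so $\bm y$ is a $QC(\bm a)$.

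\medskip
\noindent The only nontrivial point is arguing that the integer coefficients arising from divided differences are all nonzero; this follows immediately from distinctness of the $a_j$'s, so there is no real obstacle. Everything else is linear algebra and a clean induction.
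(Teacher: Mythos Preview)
Your proof is correct and follows essentially the same approach as the paper: both define the $\gamma_{i,j}$ via (cleared-denominator) Lagrange/divided-difference coefficients, verify they are nonzero integers from distinctness of the $a_j$'s, and then run the same induction from $P(a_1)=y_1,\,P(a_2)=y_2,\,P(a_3)=y_3$ using $\gamma_{i,3}\neq 0$ at each step. The only cosmetic difference is that the paper writes $\gamma_{i,j}=M\prod_{s\neq j}(a_{i+j}-a_{i+s})^{-1}$ for a common clearing integer $M$, whereas you multiply through by the full Vandermonde-type product; your notation ``$\{p,q\}\neq\{j,\cdot\}$'' is a bit informal (you mean $j\notin\{p,q\}$), but the content is the same.
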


\begin{proof}
	For each $1 \le i \le k-3$ and $0 \le j \le 3$, set
	\[
	\gamma_{i,j} = M \prod_{s \in \{0,1,2,3\} \setminus \{j\}} (a_{i+j} - a_{i+s})^{-1}
	\]
	where $M$ is some positive integer so that all $\gamma_{i,j}$'s are integers.
	We have that for all polynomials $P$ of degree at most $2$, and all $1 \le i \le k-3$,
	\begin{equation}
		\label{eq:qc-P}
		\gamma_{i,0} P(a_i) +  \gamma_{i,1} P(a_{i+1}) + \gamma_{i,2} P(a_{i+2}) + \gamma_{i,3} P(a_{i+3})  = 0 \quad  \text{ for each } 1 \le i \le k-3.
	\end{equation}
	The above identity is essentially the Lagrange polynomial interpolation formula. It can also be verified, for each $i$, by checking the identity on three linearly independent polynomials $P(x) = \prod_{s \in \{0,1,2\} \setminus \{j\}} (x - a_{i+s})$ for $j \in \{0,1,2\}$. Both the linear independence claim and the quadratic polynomial identity can be verified by evaluations at $x = a_i, a_{i+1}, a_{i+2}$.

	If $\bm y$ is $QC(\bm a)$, then by definition $y_i = P(a_i)$ for some nonconstant polynomial $P$ of degree at most $2$, and thus $\bm y$ satisfies \eqref{eq:qc-linear}. Furthermore, $P$ cannot take the same value more than twice since it has degree at most 2, so not all coordinates of $\bm y$ are equal.
	
	Conversely, suppose some non-constant vector $\bm y$ satisfies \eqref{eq:qc-linear}. Let $P$ be a polynomial of degree at most 2 such that $P(a_j) = y_j$ for $j = 1,2,3$. Comparing \eqref{eq:qc-linear} and \eqref{eq:qc-P} for each $i = 1, 2, \dots, k-3$ sequentially, noting that $\gamma_{i,j} \ne 0$ for all $i,j$, we find that $P(a_j) = y_j$ for all $1 \le j \le k$. Since $\bm y$ is non-constant, $P$ is non-constant as well.
\end{proof}

%

The next lemma is a modification of Behrend's construction, following Ruzsa's appendix \cite{BHKR}.

\begin{lemma}\label{lem:behrend-modified-2}
Fix a vector $\bm a = (a_1, \dots, a_5)$ of five distinct integers. There exists a constant $C = C_{\bm a}$ such that for every positive integer $L$, there exists $\Lambda \subseteq \{0,1, \dots, L-1\}$ with $|\Lambda| \ge Le^{-C\sqrt{\log L}}$ that does not contain any $QC(\bm a)$.
\end{lemma}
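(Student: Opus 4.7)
The plan is to carry out a Behrend-type construction adapted to forbidding $QC(\bm a)$, replacing the classical sphere-plus-convexity argument that kills $3$-APs with a polynomial rigidity argument that kills $QC(\bm a)$. First, apply \cref{lem:qc} with $k=5$ to produce two vectors of nonzero integer coefficients $(\gamma_{i,0},\gamma_{i,1},\gamma_{i,2},\gamma_{i,3})$ for $i=1,2$ that, together with non-constancy, linearly characterize $QC(\bm a)$ among $5$-tuples. Let $\Gamma = 1 + \max_{i=1,2} \sum_j |\gamma_{i,j}|$, a positive integer depending only on $\bm a$.

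For parameters $d, M \ge 1$ to be chosen later, by pigeonhole there is some $R \le dM^2$ with
\[
S := \{ v \in \{0, 1, \ldots, M-1\}^d : \|v\|^2 = R\}
\]
satisfying $|S| \ge M^d/(dM^2 + 1)$. Set $B = \Gamma M$ and define $\pi : \ZZ^d \to \ZZ$ by $\pi(v) = \sum_{k=1}^d v_k B^{k-1}$; since $M \le B$, $\pi$ is injective on $\{0,\ldots,M-1\}^d$. I will take $\Lambda := \pi(S) \subseteq [0, B^d)$, choosing $d \approx \sqrt{\log L}$ and $M \approx L^{1/d}/\Gamma$ so that $B^d \le L$; a routine Behrend-style calculation then gives $\Lambda \subseteq \{0, 1, \ldots, L-1\}$ and $|\Lambda| \ge L e^{-C_{\bm a}\sqrt{\log L}}$.

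The central task is to show $\Lambda$ contains no $QC(\bm a)$. Suppose otherwise, and lift such a configuration via $\pi^{-1}$ to $v^{(1)}, \ldots, v^{(5)} \in S$ that are not all equal. From the two relations $\sum_j \gamma_{i,j} \pi(v^{(i+j)}) = 0$ supplied by \cref{lem:qc}, expanding in base $B$ gives
\[
\sum_{k=1}^d B^{k-1} c_{k,i} = 0 \quad \text{where} \quad c_{k,i} := \sum_{j=0}^{3} \gamma_{i,j} v^{(i+j)}_k.
\]
Each $|c_{k,i}| \le (\Gamma - 1)(M - 1) < B$, so the base-$B$ uniqueness argument (look modulo $B$, divide, and iterate) forces $c_{k,i} = 0$ for every $k$ and $i = 1, 2$. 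Thus each coordinate slice $(v^{(1)}_k, \ldots, v^{(5)}_k)$ satisfies \eqref{eq:qc-linear}, and by the reverse direction of \cref{lem:qc} (allowing the trivial case of a constant slice) there is a polynomial $P_k$ of degree at most $2$ with $P_k(a_i) = v^{(i)}_k$ for all $i$.

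The key rigidity step now applies. Assembling $P = (P_1, \ldots, P_d) : \RR \to \RR^d$ gives a polynomial of degree at most $2$ with $P(a_i) = v^{(i)}$, and it is non-constant because the $v^{(i)}$ are not all equal. Since each $v^{(i)}$ lies on the sphere $\|v\|^2 = R$, the scalar polynomial $t \mapsto \|P(t)\|^2$, of degree at most $4$, equals $R$ at the five distinct points $a_1, \ldots, a_5$, hence is identically $R$. Writing $P(t) = \alpha t^2 + \beta t + \gamma$ with $\alpha, \beta, \gamma \in \RR^d$, the coefficient of $t^4$ in $\|P(t)\|^2$ is $\|\alpha\|^2$, forcing $\alpha = 0$; the coefficient of $t^2$ is then $\|\beta\|^2$, forcing $\beta = 0$. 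Hence $P$ is constant, contradicting non-constancy. The main conceptual obstacle is precisely this sphere-rigidity step, which crucially requires the full set of five distinct $a_i$ (so that a degree-$\le 4$ polynomial matching $R$ at five points is forced to be constant) and replaces the strict convexity argument driving the classical Behrend construction.
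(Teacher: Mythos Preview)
Your proof is correct and follows essentially the same approach as the paper's: a Behrend-type encoding of lattice points on a sphere in base $B$, lifting the linear relations from \cref{lem:qc} to each digit via base-$B$ uniqueness, and then using that $\|P(t)\|^2$ is a degree-$\le 4$ polynomial equal to $R$ at five distinct points to force $P$ constant. The only differences are cosmetic (you take digits in $\{0,\dots,M-1\}$ with base $B=\Gamma M$, whereas the paper takes digits in $\{0,\dots,\lfloor m/\Gamma\rfloor-1\}$ with base $m$, and you spell out the $t^4$ and $t^2$ coefficients of $\|P(t)\|^2$ explicitly).
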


\begin{proof}
Let $\gamma_{i,j}$, $i \in \{1,2\}$, $j \in \{0,1,2,3\}$ be the integer coefficients from \cref{lem:qc}. Let $\Gamma = 4 \max_{i,j} |\gamma_{i,j}|$. Define
\[
\Lambda = \{x_0 + x_1 m + \cdots + x_{d-1} m^{d-1} : \text{each }x_j \in \{0, 1, \dots, \floor{m/\Gamma}-1\} \text{ and } \ \sum_{j=0}^{d-1} x_j^2 = r\}
\]
where $d = \floor{\sqrt{\log L}}$, $m = \floor{L^{1/d}}$, and $r < d(m/\Gamma)^2$ chosen so that $\abs{\Lambda}$ is as large as possible. Then $|\Lambda| \ge \floor{m/\Gamma}^d/(d(m/\Gamma)^{2}) \ge L e^{-C\sqrt{\log L}}$.

It remains to show that $\Lambda$ contains no $QC(\bm a)$. Indeed, suppose $y_1, \dots, y_5 \in \Lambda$ form a $QC(\bm a)$ via non-constant polynomial $P$ of degree at most $2$ so that $y_i = P(a_i)$ for each $i \in [5]$. For each $i \in [5]$, let
\[
y_i = x_{i,0} + x_{i,1} m + \cdots + x_{i,d-1} m^{d-1}
\]
with integers $x_{j,i} \in \{0,1, \dots, \floor{m/\Gamma}-1\}$. By \eqref{eq:qc-linear}, we have, for each $i = 1,2$,
\[
	\sum_{j=0}^{d-1} (\gamma_{i,0} x_{i,j} +  \gamma_{i,1} x_{i+1,j} + \gamma_{i,2} x_{i+2,j} + \gamma_{i,3} x_{i+3,j}) m^j = 0.
\]
The coefficient of each $m^j$ is an integer less than $m$ in absolute value, so they must all be zero due to the uniqueness of base-$m$ expansion. It follows that 
\[
\gamma_{i,0} x_{i,j} +  \gamma_{i,1} x_{i+1,j} + \gamma_{i,2} x_{i+2,j} + \gamma_{i,3} x_{i+3,j} = 0
\]
for each $i \in \{1,2\}$ and $j \in \{0, \dots, d-1\}$. Then, for each $j \in \{0, \dots, d-1\}$ there exist a polynomial $P_j$ of degree at most $2$ such that $P_j(a_i) = x_{i,j}$ for each $i \in [5]$. Let $\bm P(t) = (P_0(t), \dots, P_{d-1}(t))$. Then $|\bm P(a_i)|^2 = r$ for each $i \in [5]$ by the construction of $\Lambda$. So $|\bm P(t)|^2 - r$ is a polynomial of degree at most 4 with 5 distinct real roots $t = a_1, \dots, a_5$, and thus $\bm P(t)$ must be a constant, and so $y_1 = \cdots = y_5$. Hence $\Lambda$ has no $QC(\bm a)$.
\end{proof}

The next lemma is analogous to \cref{lem:alpha-transfer}.

\begin{lemma}\label{lem:alpha-transfer-2}
Fix a vector $\bm a = (a_1, \dots, a_5)$ of five distinct integers. There exist positive integers $\Theta_1, \Theta_2, \Theta_3$ depending only on $\bm a$ such that the following holds. Let $\Lambda$ be a subset of $\{0, 1, \ldots, L - 1\}$ not containing any $QC(\bm a)$. For each $j\in\Lambda$, let
\[I_j := \left[\frac{j}{\Theta_1 L}, \frac{j}{\Theta_1 L} + \frac{1}{\Theta_1^2 L}\right) \subseteq \TT := \RR/\ZZ,
\]
and let
\[
B = \bigcup_{j \in \Lambda} I_j.
\]
Let $\alpha \in \RR$, $n,d \in \ZZ$, and $u_i = \alpha (n + a_id)^2$ for $1 \le i \le 5$, and suppose that $u_i \pmod 1$ lies in $B$ for each $i \in [5]$. Then $\norm{\Theta_2\alpha nd}_{\RR/\ZZ} < \Theta_3/L$.
\end{lemma}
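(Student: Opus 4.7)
The plan is to mimic the proof of \cref{lem:alpha-transfer}, replacing the single identity $x+y+z = 3w$ by the pair of linear identities from \cref{lem:qc}, and then performing one extra linear-algebraic step to isolate $\alpha nd$. The starting point is that
\[u_i = \alpha(n + a_i d)^2 = P(a_i), \qquad \text{where } P(t) := \alpha n^2 + 2\alpha nd\cdot t + \alpha d^2\cdot t^2\]
is a polynomial of degree at most $2$ in $t$. Consequently, \eqref{eq:qc-P} applied to $P$ gives
\[\gamma_{i,0}u_i + \gamma_{i,1}u_{i+1} + \gamma_{i,2}u_{i+2} + \gamma_{i,3}u_{i+3} = 0 \qquad (i = 1,2),\]
where the $\gamma_{i,k}$ are the nonzero integer constants supplied by \cref{lem:qc}.

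First, I would use these identities to show that all the ``labels'' of the intervals containing the $u_i$ must coincide. For each $j$, let $J_j \in \Lambda$ be the unique index with $u_j \bmod 1 \in I_{J_j}$, and write $u_j = M_j + J_j/(\Theta_1 L) + \delta_j$ with $M_j \in \ZZ$ and $\delta_j \in [0, 1/(\Theta_1^2 L))$. Substituting into the identity above and multiplying through by $\Theta_1 L$ yields
\[\sum_{k=0}^{3} \gamma_{i,k} J_{i+k} + \Theta_1 L \sum_{k=0}^3 \gamma_{i,k} M_{i+k} = -\Theta_1 L \sum_{k=0}^3 \gamma_{i,k} \delta_{i+k}.\]
The left-hand side is an integer, while the right-hand side has absolute value at most $4\Gamma/\Theta_1$, where $\Gamma := \max_{i,k} |\gamma_{i,k}| \ge 1$. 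Choosing the positive integer $\Theta_1 := 4\Gamma + 1$ makes the right-hand side strictly less than $1$, so both sides vanish; in particular $\sum_k \gamma_{i,k} J_{i+k}$ is an integer multiple of $\Theta_1 L$. Since $|\sum_k \gamma_{i,k} J_{i+k}| \le 4\Gamma(L-1) < \Theta_1 L$, this multiple equals $0$. By \cref{lem:qc} the vector $(J_1, \dots, J_5)$ is either a $QC(\bm a)$ or constant, and the $QC(\bm a)$-freeness of $\Lambda$ forces it to be constant, say all equal to $J$.

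Next I would extract $\alpha nd$ via an auxiliary integer combination. Because the polynomials $1, t, t^2$ are linearly independent on the five distinct points $a_1, \dots, a_5$, the map $c \mapsto (\sum c_i, \sum c_i a_i, \sum c_i a_i^2)$ from $\RR^5$ to $\RR^3$ has rank $3$, so there exist integers $c_1, \dots, c_5$ with $\sum c_i = 0$, $\sum c_i a_i^2 = 0$, but $\sum c_i a_i \ne 0$. Expanding $\sum c_i u_i = \sum c_i P(a_i)$ using these conditions gives
\[\sum_{i=1}^{5} c_i u_i = 2\alpha nd \sum_{i=1}^5 c_i a_i.\]
On the other hand, the decomposition of the $u_j$ combined with $\sum c_i = 0$ yields $\sum c_i u_i = \sum c_i M_i + \sum c_i \delta_i$, which lies within $\bigl(\sum_i |c_i|\bigr)/(\Theta_1^2 L)$ of an integer. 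Setting $\Theta_2 := 2|\sum c_i a_i|$ (a positive integer) and choosing any positive integer $\Theta_3 > \sum_i |c_i|/\Theta_1^2$, we conclude $\|\Theta_2 \alpha nd\|_{\RR/\ZZ} < \Theta_3/L$, as required. The only real subtlety of the plan is calibrating $\Theta_1$ large enough so that the error is simultaneously below $1$ (forcing the integer identity on the $J_j$'s) and also below the period $\Theta_1 L$ of the label sum; this step is the direct analogue of, and replaces, the ``intervals intersect in at most one configuration'' case analysis from the proof of \cref{lem:alpha-transfer}.
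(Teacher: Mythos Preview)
Your proof is correct and follows the same two-step approach as the paper: first use the linear relations from \cref{lem:qc} to force all five interval labels $J_i$ to coincide, then isolate $\alpha nd$ via an integer linear combination of the $u_i$ that annihilates the $n^2$ and $d^2$ coefficients. The only difference is that the paper writes this combination down explicitly, using the three-term identity $(a_2^2-a_3^2)u_1 + (a_3^2-a_1^2)u_2 + (a_1^2-a_2^2)u_3 = 2(a_1-a_2)(a_2-a_3)(a_3-a_1)\,\alpha nd$, whereas you obtain suitable $c_i$ abstractly via a rank argument; both choices work equally well.
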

\begin{proof}
Let $\gamma_{i,j}$, $i \in \{1,2\}$, $j \in \{0,1,2,3\}$, be the integer coefficients from \cref{lem:qc}. Let $\Theta_1 = 4\max_{i,j} |\gamma_{i,j}|$. Let $U_1, \dots, U_5 \in \Lambda$ such that $u_i \pmod 1 \in I_{U_i}$ for each $i \in [5]$. Since the linear relations \eqref{eq:qc-P} holds for the quadratic polynomial $P(t) = \alpha(n+ dt)^2$, we have
\[
	\gamma_{1,0} u_1 +  \gamma_{1,1} u_{2} + \gamma_{1,2} u_{3} + \gamma_{1,3} u_{4}  = 0.
\]
For each $i$, we have $u_i \pmod 1 \in I_{U_i}$, and so $\norm{u_i - U_i/(\Theta_1L)} < 1/(\Theta_1^2L)$. Considering the above displayed equality, we have
\[
\norm{\frac{\gamma_{1,0} U_1 + \gamma_{1,1} U_2 + \gamma_{1,2} U_3 + \gamma_{1,3} U_4}{\Theta_1 L}}_{\RR/\ZZ} < \frac{\Theta_1}{\Theta_1^2L} = \frac{1}{\Theta_1L}.
\]
Since $|\gamma_1 U_1 + \gamma_2  U_2 + \gamma_3  U_3 + \gamma_4  U_4| < \Theta_1 L$, it follows that
\[
\gamma_{1,0} U_1 + \gamma_{1,1} U_2 + \gamma_{1,2} U_3 + \gamma_{1,3} U_4 = 0
\] 
as integers. Likewise,
\[
\gamma_{2,0} U_2 + \gamma_{2,1} U_3 + \gamma_{2,2} U_4 + \gamma_{2,3} U_5 = 0.
\]
Since $U_1, \dots, U_5 \in \Lambda$ and $\Lambda$ has no $QC(\bm a)$, we must have $U_1 = \cdots = U_5$ due to the characterization of $QC(\bm a)$ in \cref{lem:qc}.

We have the identity
\[
2(a_1 - a_2)(a_2 - a_3)(a_3 - a_1) n d = (a_2^2 - a_3^2) (n + da_1)^2 + (a_3^2 - a_1^2) (n + da_2)^2 + (a_1^2 - a_2^2) (n + da_3)^2.
\] 
Thus, setting $\Theta_2 = |2(a_1 - a_2)(a_2 - a_3)(a_3 - a_1)|$ and $\Theta_3 = \ceil{3 \max_{i\in[3]} |a_i|^2/\Theta_1^2}$, we have
\begin{align*}
&\norm{\Theta_2\alpha nd}_{\RR/\ZZ}
\\
&= \norm{(a_2^2 - a_3^2)u_1 + (a_3^2 - a_1^2)u_2 + (a_1^2 - a_2^2)u_3}_{\RR/\ZZ}
\\
&\le  \norm{(a_2^2 - a_3^2)\frac{U_1}{\Theta_1 L} + (a_3^2 - a_1^2)\frac{U_2}{\Theta_1 L} + (a_1^2 - a_2^2)\frac{U_3}{\Theta_1 L}}_{\RR/\ZZ}
+ (3 \max_{i\in[3]} |a_i|^2) \max_{i \in [3]} \norm{u_i - \frac{U_i}{\Theta_1 L}}_{\RR/\ZZ}
\\
&=  (3 \max_{i\in[3]} |a_i|^2) \max_{i \in [3]} \norm{u_i - \frac{U_i}{\Theta_1 L}}_{\RR/\ZZ} < \frac{\Theta_3}{L}, \end{align*}
In the last equality step we are using that $U_1 =U_2 = U_3$.
\end{proof}

\begin{proof}[Proof of \cref{thm:5-point}]
The construction is nearly identical to the proof of \cref{thm:3d-corner}, except that now we apply \cref{lem:alpha-transfer-2} instead of \cref{lem:alpha-transfer}. In particular, let $\Lambda$ be as in \cref{lem:behrend-modified-2}, construct $B$ as in \cref{lem:alpha-transfer-2}, and let $F = \{n\in\NN: n\alpha\in B\pmod{1}\}$ where $\alpha$ is a suitable irrational number (as in the proof of \cref{thm:3d-corner}). Then $A = \{x\in [N]: x^2\in F\}$ is the desired set.
\end{proof}

\subsection{Proof of \cref{cor:3d-pattern}} We first consider the case when the pattern $T$ has at least $5$ points. By removing extra points, it suffices to prove the result with $|T| = 5$. Fix an integer $C$ larger than the sum of the magnitudes of the coordinates of all points in $T$. 
Define the map $\varphi: (x_1,\ldots,x_k)\mapsto \sum _{i=1}^{k} C^{i}x_i$. Let $A$ be a set in $[C'N]$ coming from \cref{thm:5-point} containing at most $\delta^{c \log(1/\delta)} N$ translates of every dilate of $\varphi(T)$. Then  $\varphi^{-1}(A) \cap [N]^k$ is the desired construction that has at most  $\delta^{c' \log(1/\delta)} N^k$ translates of every dilate of $T$.

We now consider patterns $T$ with affine dimension at least $3$. First let us consider the case where $\bm{0}, \bm e_1, \bm e_2, \bm e_3 \in T$, where $\bm e_i$ is the $i$-th coordinate vector. Let $A \subset [N]^3$ as in \cref{thm:3d-corner}. Then $A \times [N]^{k-3} \subset [N]^k$ has the desired property.

Now for an arbitrary $T$ of affine dimension at least 3, we find four points $v_1, v_2, v_3, v_4 \in T$ with affine span of dimension 3 (i.e., not all lying on a plane). Consider the $d$-dimension lattice $L$ generated by $v_2-v_1, v_3-v_1, v_4-v_1$ as well as $d-3$ other independent integer vectors in the complementary subspace. Let $\Phi$ be the linear transformation sending $\ZZ^d$ to $L$ and the first three coordinate vectors to $v_2-v_1, v_3-v_1, v_4-v_1$. Then the image of $A \times [N]^{k-3} \subset [N]^k$ (from the previous paragraph) under $\Phi$ has the desired property (one loses only a constant factor in size depending on the map $\Phi$).
\qed 

\section{Open problems}
We end on a list of open problems.

\subsection{Quantitative dependence}
Let $N_0(\epsilon)$ be the minimum $N_0$ so that Green's theorem \cite{Green05} on popular difference for 3-APs holds for all $N \ge N_0(\epsilon)$, i.e., for every $\epsilon > 0$, $N \ge N_0(\epsilon)$, and $A \subseteq [N]$ with $|A| \ge \delta N$, there is some  $d > 0$ such that $A$ contains at least $(\delta^3 -\epsilon) N$ different 3-APs  with common difference $d$. Green proved that $N_0(\epsilon)$ is at most an exponential tower of 2's with height $\epsilon^{-O(1)}$. The bounds are due to the use of a Szemer\'edi-type regularity lemma argument adapted to the arithmetic setting. 
Recently, Fox, Pham, and Zhao \cite{FP1,FP2,FPZ} showed that the regularity-type bounds are necessary, namely that $N_0$ grows as an exponential tower of twos of height $\Theta(\log (1/\epsilon))$. 

It remains to be explored the quantitative dependencies of $N_0$ on $\epsilon$ for other popular difference patterns such as 4-APs and 2-dimensional corners.

\subsection{Four-point patterns in one and two dimensions}
\cref{cor:3d-pattern} addresses all patterns with affine dimension $3$ or at least $5$ points. This leaves open the cases of popular differences all 4-point patterns in dimensions 1 and 2 except those resolved positively by Green and Tao~\cite{GrTa10} (namely 4-APs and other patterns of the form $\{0, a, b, a+b\}$).

A particular enticing open case is the axis-aligned square: $\{(0,0),(0,1),(1,0),(1,1)\}$. The 1-dimensional projection of this pattern yields a pattern of the form $\{0, a, b, a+b\}$, and this 1-dimensional pattern has the positivity property used in the 4-AP popular difference proof~\cite{GrTa10}.

\subsection{Finite fields versus integers (or other groups)}
The claim in Mandache's \cref{thm:man} that $m_{\text{convex}} \le M$ for 2-dimensional corners (and also \cref{cor:corner}) is currently only proved for $G = \FF_p^n$ with fixed $p$. It seems likely that the methods can be extended to all abelian groups via Bohr sets, similar to \cite{Green05}. However, this has yet to be worked out.

On the other hand, our counterexample construction \cref{thm:3d-corner} for 3-dimensional corners only works over the integers, and not, say $\FF_p^n$. It remains to explore the problem over other groups.

\end{document}